\definecolor{cadmiumgreen}{rgb}{0.0, 0.42, 0.24}
\newtheorem{thm}{Theorem}[section]
\newtheorem*{thm*}{Theorem}
\newtheorem{prop}[thm]{Proposition}
\newtheorem{lem}[thm]{Lemma}
\theoremstyle{definition}
\newtheorem{prob}[thm]{Problem}
\newtheorem{dfn}[thm]{Definition}
\newtheorem*{dfn*}{Definition}
\newtheorem{eg}[thm]{Example}
\newtheorem{rmk}[thm]{Remark}
\numberwithin{equation}{section}
\newcommand{\RR}{\mathbb{R}}
\newcommand{\cN}{\mathcal{N}}
\newcommand{\ellzero}{\ell^{(0)}}
\newcommand{\tderiv}[1]{\frac{d\,{#1}}{dt}}
\newcommand{\partialderiv}[2]{\frac{\partial\,{#1}}{\partial\,{#2}}}
\newcommand{\curv}{\mathrm{K}} 
\title{A Ricci flow on graphs from effective resistance}
\author{Aleyah Dawkins}
\affil{George Mason University}
\author{Vishal Gupta}
\affil{University of Delaware}
\author{Mark Kempton}
\affil{Brigham Young University}
\author{William Linz}
\affil{University of South Carolina}
\author{Jeremy Quail}
\affil{University of Vermont}
\author{Harry Richman}
\affil{Fred Hutchinson Cancer Center}
\author{Zachary Stier}
\affil{University of California, Berkeley}
\date{}
\begin{document}

\maketitle

\begin{abstract}
    In this paper, we introduce a new notion of curvature on the edges of a graph that is defined in terms of effective resistances.
    We call this the Ricci--Foster curvature. 
    We study the Ricci flow resulting from this curvature.
    We prove the existence of solutions to Ricci flow on short time intervals, and prove that Ricci flow preserves graphs with nonnegative (resp. positive) curvature.
\end{abstract}


\section{Introduction}

Recent years have seen considerable work in taking ideas from differential geometry and applying them to discrete settings on graphs.  Of particular significance in this realm has been defining notions of curvature for graphs. The notions of Ricci curvature and scalar curvature are critical tools in differential geometry for understanding manifolds.  Of particular note is the notion of Ricci flow associated with curvature, which was a principal tool in the proof of the Poincar\'e conjecture~\cite{perelman}.  
Various notions of curvature have been defined for graphs, including the so-called Bakry--\'Emery curvature~\cite{BE}, Ollivier curvature~\cite{ollivier}, Lin--Lu--Yau curvature~\cite{LLY}, Forman curvature~\cite{forman}, and combinatorial curvature~\cite{higuchi}.  For some of these, particularly for the Ollivier and Forman curvatures, an associated Ricci flow has been studied~\cite{NLLG,BLLWY,CKLMPS}.

Recently, Devriendt and Lambiotte~\cite{DL} introduced a notion of Ricci curvature for graphs, based on effective resistance (defined in Section \ref{sec:res} below). 
They defined a curvature for the vertices and edges of a graph, and studied some properties of Ricci flow resulting from this notion of edge curvature.

The main contribution of this work is to introduce a variant on the edge curvature of Devriendt and Lambiotte \cite{DL} which yields the same vertex curvature (which we view as a {\em scalar curvature}) of that paper, but which leads to a more natural notion of Ricci flow.

Classically, the Ricci curvature and Ricci flow satisfy several key properties, including:

\begin{enumerate}
    \item The Ricci tensor is invariant under uniform scaling of the metric~\cite{topping}.

    \item Total volume is weakly decreasing under Ricci flow on a closed manifold with nonnegative scalar curvature~\cite[Corollary 3.2.6]{topping}.

    \item Nonnegative (or positive) scalar curvature is preserved under Ricci flow on a closed manifold~\cite[Corollary 3.2.3]{topping}.

    \item Ricci flow has no periodic orbits, on the space of Riemannian metrics modulo diffeomorphism and scaling~\cite{perelman}.
\end{enumerate}
The Ricci flow we introduce on graphs satisfies the first three of these properties;
see Proposition~\ref{prop:curv-rescaling}, Proposition~\ref{prop:total-length-flow}, and Theorem~\ref{thm:positively-curved}. 
The analogue of the fourth property we leave open, but conjecture that it also holds.

In this paper, we accomplish the following:
\begin{itemize}
\item We introduce a new notion of resistance-based Ricci curvature on the edges of a graph, which we call the {\em Ricci--Foster curvature}.

\item We show the existence of Ricci flow for this curvature on an arbitrary edge-weighted graph, for a short time interval.

\item We show that under this notion of Ricci curvature, Ricci flow preserves nonnegatively curved graphs.
\end{itemize}

On a graph $G = (V,E)$, for each edge $e = uv$ let $\ell_{e}$ denote the {\em resistance} of $e$, which is a positive real number.
For any vertices $u, v$ in $G$, let $\omega_{uv}$ denote the {\em effective resistance} between $u$ and $v$.

\begin{dfn}[Ricci--Foster curvature]
    If $e = uv$ is an edge in a weighted graph, let $\curv_e$ denote the following {\em Ricci--Foster curvature} of $e$:
    \[\curv_{e} = \frac1{\deg_u} + \frac1{\deg_v} - \frac{\omega_{uv}}{\ell_{e}}.\]
\end{dfn}

\begin{thm}\label{thm:flow-existence}
    Suppose $(G, \ellzero)$ is an edge-weighted graph.
    Consider the Ricci--Foster flow on $\{\ell_e(t) : e \in E(G)\}$ defined by the system of differential equations
    \begin{equation}\label{eq:ricci-flow-intro}
        \tderiv{} \ell_e(t) = - \curv_e(\ell(t))
        \qquad\text{and}\qquad
        \ell_e(0) = \ellzero_{e}
    \end{equation}
    for each edge $e \in E(G)$.
    Then for some $T > 0$ there exists a unique solution to this system of differential equations on the interval $t \in [0, T)$.
\end{thm}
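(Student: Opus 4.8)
The plan is to read \eqref{eq:ricci-flow-intro} as an autonomous first-order system $\tderiv{\ell} = F(\ell)$ on the open positive orthant $U = \{\ell \in \RR^{E(G)} : \ell_e > 0 \text{ for all } e \in E(G)\}$, where $F_e(\ell) = -\curv_e(\ell)$, and then to invoke the Picard--Lindel\"of (Cauchy--Lipschitz) theorem. Since $\ellzero \in U$ and $U$ is open, it suffices to check that $F$ is locally Lipschitz near $\ellzero$; in fact we will see that $F$ is real-analytic on all of $U$, and then the standard existence--uniqueness theorem for ODEs supplies $T > 0$ and a unique $C^1$ solution $\ell \colon [0,T) \to U$ of \eqref{eq:ricci-flow-intro}, which yields the assertion upon passing to the maximal such interval.

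So the content is the regularity of $\ell \mapsto \curv_e(\ell)$ on $U$. First, $1/\ell_e$ is analytic on $U$, and $\tfrac1{\deg_u} + \tfrac1{\deg_v}$ is analytic on $U$ as well --- whether $\deg_u$ denotes the combinatorial degree (a positive constant) or a weighted degree such as $\sum_{f \ni u} 1/\ell_f$ (a strictly positive analytic function of $\ell$ on $U$). It remains to treat $\omega_{uv}$, which is genuinely defined along the flow since $e = uv$ is an edge, so $u$ and $v$ lie in a common connected component $H$ of $G$. Writing $c_f = 1/\ell_f$ for the conductances, Kirchhoff's formula (the weighted matrix--tree theorem) gives
\[
\omega_{uv} \;=\; \frac{\sum_{F}\ \prod_{f \in F} c_f}{\sum_{T}\ \prod_{f \in T} c_f},
\]
where $T$ ranges over spanning trees of $H$ and $F$ over spanning two-forests of $H$ separating $u$ from $v$. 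Both sums are polynomials in $(c_f)$ with nonnegative coefficients, and the denominator is strictly positive on $U$: $H$ is connected, so it has at least one spanning tree, and each monomial $\prod_{f \in T} c_f$ is positive on $U$. Substituting $c_f = 1/\ell_f$ and clearing denominators exhibits $\omega_{uv}$ as a ratio of two polynomials in $(\ell_f)_{f \in E(G)}$ whose denominator does not vanish on $U$; hence $\omega_{uv}$, and therefore $\curv_e$ and $F$, is real-analytic on $U$.

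Consequently $F \in C^1(U; \RR^{E(G)})$, in particular locally Lipschitz, and Picard--Lindel\"of yields, for some $T > 0$, a unique solution $\ell \colon [0,T) \to U$ of \eqref{eq:ricci-flow-intro}; this is the theorem. The only substantive step, I expect, is the regularity of the effective resistance as a function of the edge weights --- namely recalling its matrix--tree representation and checking that the relevant spanning-tree polynomial is nonvanishing on $U$; everything else is a direct appeal to the basic ODE existence--uniqueness theorem. A minor point to pin down is the paper's convention for $\deg_u$, only to confirm it is a positive analytic function of $\ell$ so that $1/\deg_u$ causes no trouble. Note that this argument says nothing about how large $T$ may be taken, nor about the behaviour of $\ell_e(t)$ as $t$ approaches the end of the maximal interval of existence --- those are separate questions.
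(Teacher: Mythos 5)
Your proposal is correct and follows essentially the same route as the paper: both arguments reduce to observing that $\curv_e$ is a rational function of the edge lengths whose denominator (a spanning-tree polynomial) is nonvanishing on the positive orthant, and then invoke Picard--Lindel\"of. The only cosmetic difference is that you pass through the conductance form of Kirchhoff's formula and convert back, whereas the paper uses the resistance-variable spanning-tree expression of Definition~\ref{dfn:eff-res} directly.
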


Say an edge-weighted graph $(G, \ell)$ is {\em nonnegatively curved} (resp.\ {\em positively curved}) if every edge of $G$ has nonnegative (resp.\ positive) Ricci--Foster curvature.

\begin{thm}\label{thm:positively-curved}
    Suppose $(G, \ellzero)$ is a finite, edge-weighted graph,
    and let $(G, \ell(t))$ for $t \in [0, T)$ denote the family of graphs obtained from $(G, \ellzero)$ by applying Ricci--Foster flow~\eqref{eq:ricci-flow-intro}.
    If $(G, \ellzero)$ is nonnegatively curved, then so is $(G, \ell(t))$ for all $t \in [0, T)$.
\end{thm}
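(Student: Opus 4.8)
The plan is to derive the system of ordinary differential equations satisfied by the curvatures $\curv_e(t)$, and then run a maximum principle on the minimum curvature $m(t) := \min_{e \in E(G)} \curv_e(t)$. We may assume $G$ is connected (otherwise argue componentwise). By Theorem~\ref{thm:flow-existence} the solution $\ell(t)$ is smooth on $[0,T)$ and remains in the open cone $\{\ell_f > 0\}$, so it suffices to show $m(t) \ge 0$ on every compact subinterval $[0,T'] \subseteq [0,T)$, on each of which all $\ell_f(t)$ lie in a fixed interval $[\delta, M]$ with $\delta > 0$.

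First I would compute $\curv_e'(t)$. Since $\deg_u$ and $\deg_v$ are combinatorial and hence constant along the flow, for $e = uv$ the quantity $\curv_e = \deg_u^{-1} + \deg_v^{-1} - \omega_{uv}/\ell_e$ depends on $t$ only through $\omega_{uv}$ and $\ell_e$. Using the classical formula for the derivative of effective resistance with respect to an edge resistance,
\[
 \rho_{e,f} \;:=\; \partialderiv{\omega_{uv}}{\ell_f} \;=\; i_f(e)^2 \;\ge\; 0 ,
\]
where $i_f(e)$ is the current through $f$ in the unit electrical flow between $u$ and $v$ (this is Rayleigh monotonicity in infinitesimal form), together with $\ell_f'(t) = -\curv_f$ and the quotient rule, one obtains the closed system
\begin{equation}\label{eq:curv-evol}
 \curv_e'(t) \;=\; \frac{1}{\ell_e}\sum_{f \in E} \rho_{e,f}\,\curv_f \;-\; \frac{\omega_{uv}}{\ell_e^{2}}\,\curv_e ,
\end{equation}
with everything on the right evaluated along the flow. (One checks $\sum_e \curv_e'(t) = 0$, consistent with $\sum_e \curv_e \equiv 1$ --- the invariance behind the name ``Foster''.)

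Now the maximum principle. On $[0,T']$ the parenthesised expression below is bounded in absolute value by some $C = C(T',G,\ellzero) < \infty$, because $\ell_e \ge \delta$ and both $\omega_{uv}$ and $\sum_f \rho_{e,f}$ are continuous functions of $\ell$ over a compact set. For any $t$ and any edge $e$ with $\curv_e(t) = m(t)$, using $\curv_f(t) \ge m(t)$ and $\rho_{e,f} \ge 0$, equation~\eqref{eq:curv-evol} gives
\[
 \curv_e'(t) \;\ge\; m(t)\left(\frac{1}{\ell_e}\sum_{f} \rho_{e,f} - \frac{\omega_{uv}}{\ell_e^{2}}\right) \;\ge\; -\,C\,|m(t)| .
\]
Since $m$ is locally Lipschitz and its lower right Dini derivative satisfies $D_+ m(t) \ge \min\{\curv_e'(t) : \curv_e(t) = m(t)\}$ (edges not attaining the minimum contribute difference quotients tending to $+\infty$), this yields $D_+ m(t) \ge -C\,|m(t)|$ on $[0,T']$. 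If $m(0) \ge 0$ but $m(t_1) < 0$ for some $t_1 \in [0,T']$, then $t_0 := \sup\{t \le t_1 : m(t) \ge 0\}$ satisfies $t_0 < t_1$, $m(t_0) = 0$, and $m < 0$ on $(t_0,t_1]$; there $m \le 0$, so $D_+\!\bigl(e^{-Ct}m(t)\bigr) \ge 0$, and since a continuous function with nonnegative lower right Dini derivative is nondecreasing, $e^{-Ct_1}m(t_1) \ge e^{-Ct_0}m(t_0) = 0$, a contradiction. Hence $m \ge 0$ on $[0,T')$. For the positively curved case: once $m \ge 0$ the displayed bound gives $D_+\!\bigl(e^{Ct}m(t)\bigr) \ge 0$, so $m(t) \ge e^{-Ct}m(0) > 0$ when $m(0) > 0$.

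The conceptual crux is \eqref{eq:curv-evol} together with the sign $\rho_{e,f} \ge 0$: because the degrees are frozen, $\curv_e$ evolves only through $\omega_{uv}/\ell_e$, and monotonicity of effective resistance makes the coupling term $\frac{1}{\ell_e}\sum_f \rho_{e,f}\curv_f$ a nonnegative combination of the remaining curvatures, so at an edge realizing $m = 0$ the curvature has nonnegative derivative. The rest is a routine but delicate maximum principle for a merely Lipschitz minimum; the one subtle point --- where the argument needs the bound $\curv_e'(t) \ge -C|m(t)|$ on a whole interval, not merely the pointwise fact that $\curv_e'(t) \ge 0$ at an instant where $\curv_e(t) = m(t) = 0$ --- is ruling out that $m$ reaches $0$ and then becomes negative, precisely the instant at which the linear term $-\tfrac{\omega_{uv}}{\ell_e^{2}}\curv_e$ vanishes. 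The only external input is the derivative formula for $\omega_{uv}$, which is classical; in fact the proof needs only $\rho_{e,f} \ge 0$ (Rayleigh monotonicity) and boundedness of $\sum_f \rho_{e,f}$ (smoothness of $\omega$ in the weights).
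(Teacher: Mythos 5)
Your proof is correct, but it takes a genuinely different route from the paper's. The paper does not run the maximum principle on $m(t)=\min_e \curv_e(t)$; it runs it on the normalized quantity $\min_e \curv_e(t)/\ell_e(t)$. The payoff of that normalization is Euler's homogeneous function theorem (Lemma~\ref{lem:res-homogeneous-pde}): since $\omega_e$ is homogeneous of degree one in the edge lengths, $\sum_f \ell_f \partialderiv{\omega_e}{\ell_f} = \omega_e$, and at an edge minimizing $\curv_e/\ell_e$ this identity combines with Rayleigh monotonicity to give exactly $-\tderiv{\omega_e} \geq \omega_e \curv_e/\ell_e$ (Lemma~\ref{lem:res-flow-bound}(b)), whence $\tderiv{}\left(\curv_e/\ell_e\right) \geq \curv_e^2/\ell_e^2 \geq 0$ with no sign hypothesis on the curvature and no error term. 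The minimum of the ratios is therefore nondecreasing outright, no integrating factor or Gr\"onwall comparison is needed, and one gets for free the stronger statement (recorded as a remark after the theorem) that $\min_e \curv_e/\ell_e$ is nondecreasing for \emph{arbitrary} initial data. In your unnormalized version the coefficient $\frac{1}{\ell_e}\sum_f \rho_{e,f} - \frac{\omega_e}{\ell_e^2}$ multiplying $m(t)$ has no definite sign --- which is precisely why you must pay with the compactness bound $C$ and the $e^{\pm Ct}$ comparison; your displayed evolution equation for $\curv_e'$ is in substance the paper's Lemma~\ref{lem:curvature-flow} combined with the chain-rule expansion of $\tderiv{\omega_e}$ used inside the proof of Lemma~\ref{lem:res-flow-bound}, just not divided through by $\ell_e$. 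What your argument buys is economy of inputs (only $\rho_{e,f}\geq 0$ and smoothness, no homogeneity identity) at the cost of more ODE machinery; what the paper's buys is a clean monotone quantity and the sharper remark. One point in your favor: you handle the differentiability of the minimum of finitely many smooth functions (the lower right Dini derivative step, and the passage from a pointwise derivative bound to monotonicity) more carefully than the paper, which simply asserts that the minimum is nondecreasing once the derivative at a minimizing edge is shown nonnegative.
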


\subsection*{Acknowledgements}
This work started at the 2023 American Mathematical Society Mathematical Research Communities on Ricci Curvatures of Graphs and Applications to Data Science, which was supported by the National Science Foundation under Grant Number DMS 1916439. 
We thank Fan Chung, Mark Kempton, Wuchen Li, Linyuan Lu, and Zhiyu Wang for organizing this workshop.
Our investigation was greatly assisted by the online graph curvature calculator~\cite{CKLLS-calculator}.
We thank Karel Devriendt, Andrea Ottolini, and Stefan Steinerberger for helpful conversations.
WL was also partially supported by NSF RTG Grant DMS 2038080. ZS was additionally supported by NSF grant DGE 2146752.

\section{Effective resistance}\label{sec:res}

In this section, we recall the definition and some properties of effective resistance.
We consider a graph $G = (V, E)$ where each edge $e$ has a resistance $\ell_e$.
The resistance is always a positive real number.
We will sometimes also refer to $\ell_e$ as the {\em length} of edge $e$.

Let $\omega_{xy}$ denote the {\em effective resistance} between vertices $x$ and $y$.  There are many equivalent ways to define the effective resistance in a graph.  We make use of the following.
\begin{dfn}[Effective resistance via spanning trees {\cite[Exercise 1.1]{grimmett}}]\label{dfn:eff-res}
    The effective resistance between vertices $x$ and $y$ satisfies
    \begin{equation}\label{eq:tree-ratio}
        \omega_{xy} = \frac{\tau(G/x y; \ell)}{\tau(G; \ell)}
    \end{equation}
    where $G / xy$ denotes the graph obtained from $G$ by identifying $x$ and $y$,
    and $\tau(G)$ is the weighted sum over all spanning trees
    \[
        \tau(G; \ell) = \sum_{T} \prod_{e \not \in T} \ell_e .
    \]
When $uv = e$ is an edge, we sometimes use $\omega_{e}$ to mean $\omega_{uv}$.
\end{dfn}

If $uv$ is itself an edge, then the effective resistance $\omega_{uv}$ takes into account the resistances of {\em all} edges on all paths connecting $u$ and $v$, rather than just the edge $uv$ itself.
Recall that $\ell_{e}$ denotes the resistance of an edge $e$.
In general, if $e = uv$ is an edge, we have
\[
    \omega_{uv} \leq \ell_{e}.
\]
To be more precise, if $uv = e$ forms an edge, then by the rule for combining resistances in parallel~\cite[Chapter 1.3]{grimmett} we have
\[
    \omega_{uv} = \omega_{uv}(G) = \frac{\ell_{e} \, \omega_{uv}(G \setminus e)}{\ell_{e} + \omega_{uv}(G \setminus e)},
\]
where $G \setminus e$ denotes the edge-deleted graph.  In this formula, if $e$ is a bridge (an edge whose removal disconnects the graph) then $\omega_{uv}(G\setminus e)$ is infinite and the expression above is indeterminate.  We take $\omega_{uv}=\ell_e$ in this case, as we see from the following rewriting of this formula.
\begin{equation}\label{eq:resistance-ratio}
    \frac{\omega_{uv}}{\ell_{e}} = \frac{\omega_{uv}(G \setminus e)}{\ell_{e} + \omega_{uv}(G \setminus e)}
    = 1 - \frac{\ell_{e}}{\ell_{e} + \omega_{uv}(G \setminus e)}.
\end{equation}

\begin{prop}[Rayleigh's monotonicity law]\label{prop:rayleigh}
    For any edge $e$ and any vertices $x$ and $y$, we have
    $\displaystyle {\partialderiv{\omega_{xy}}{\ell_e} \geq 0}$.
\end{prop}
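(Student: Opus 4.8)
The plan is to regard $\omega_{xy}$ as a function of the single variable $\ell_e$, with all other resistances held fixed. Grouping the spanning trees appearing in \eqref{eq:tree-ratio} according to whether they contain $e$ shows that $\tau(G;\ell)$ and $\tau(G/xy;\ell)$ are each polynomials in $\ell_e$ of degree at most one with nonnegative coefficients; since $\tau(G;\ell)>0$, their quotient $\omega_{xy}$ is a differentiable (indeed M\"obius) function of $\ell_e$ on $(0,\infty)$, so the claim $\partialderiv{\omega_{xy}}{\ell_e}\ge 0$ is equivalent to the statement that $\omega_{xy}$ is nondecreasing in $\ell_e$. To prove that monotonicity I would invoke one of the standard variational descriptions of effective resistance --- one of the ``many equivalent ways'' mentioned above --- namely Thomson's principle, $\omega_{xy}=\min_{j}\ \mathcal{E}_\ell(j)$, where the minimum runs over unit flows $j$ from $x$ to $y$ and $\mathcal{E}_\ell(j)=\sum_{f\in E(G)}\ell_f\,j_f^2$ is the dissipated energy. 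For each fixed admissible $j$ the map $\ell_e\mapsto\mathcal{E}_\ell(j)$ is affine with slope $j_e^2\ge 0$, hence nondecreasing, and a pointwise infimum of nondecreasing functions is nondecreasing. By the envelope principle this even gives the explicit formula $\partialderiv{\omega_{xy}}{\ell_e}=(j^\ast_e)^2$, where $j^\ast$ is the unit current flow; one may equally run the dual argument with Dirichlet's principle $1/\omega_{xy}=\min_\phi\sum_{pq}\ell_{pq}^{-1}(\phi(p)-\phi(q))^2$.

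Should one wish to stay entirely within the spanning-tree definition \eqref{eq:tree-ratio}, the alternative is deletion--contraction: write $\tau(G;\ell)=A+\ell_e B$ and $\tau(G/xy;\ell)=C+\ell_e D$ with $A=\tau(G/e;\ell)$, $B=\tau(G\setminus e;\ell)$, $C=\tau((G/xy)/e;\ell)$, and $D=\tau((G/xy)\setminus e;\ell)$; the quotient rule then gives $\partialderiv{\omega_{xy}}{\ell_e}=(AD-BC)/(A+\ell_e B)^2$. Since identifying $x$ with $y$ commutes with contracting or deleting $e$, \eqref{eq:tree-ratio} yields $C/A=\omega_{xy}(G/e)$ and $D/B=\omega_{xy}(G\setminus e)$, so (after handling the degenerate cases in which $e$ is a loop or a bridge, where the result is immediate) the desired inequality $AD\ge BC$ reduces to $\omega_{xy}(G/e)\le\omega_{xy}(G\setminus e)$: contracting an edge cannot increase the effective resistance, and deleting it cannot decrease it. One then finishes either by applying the constraint-relaxation idea of the previous paragraph to $G/e$ and $G\setminus e$ (which share the weighted edge set $E(G)\setminus\{e\}$, the admissible flows of $G\setminus e$ being precisely those of $G/e$ that are conserved separately at the two endpoints of $e$), or purely combinatorially, by a monomial-preserving injection from the pairs counted by $BC$ --- a spanning tree of $G$ avoiding $e$ together with a two-component, $x$--$y$-separating spanning forest using $e$ --- into the pairs counted by $AD$, built from a fundamental-cycle exchange that slides $e$ from the forest into the tree.

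The routine parts are the derivative computation and this reduction; the part I expect to be delicate, if one insists on a proof using \eqref{eq:tree-ratio} only, is the injection: one must verify it is well defined on every pair, lands in the correct set, carries the matching monomial weight, and is injective, which takes some care with the forest/tree bookkeeping and the positions of $x$ and $y$. For that reason I would present the Thomson (or Dirichlet) argument as the main proof, where the monotonicity is immediate, and treat the spanning-tree-only injection as a remark --- and since Rayleigh's monotonicity law is classical, a citation is a perfectly legitimate option as well.
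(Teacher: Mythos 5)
Your proposal is correct, but note that the paper does not actually prove this proposition: it simply cites Grimmett and Doyle--Snell, treating Rayleigh's monotonicity law as classical. Your main argument via Thomson's principle is essentially the standard proof found in those references, and it is sound: differentiability follows from the observation that \eqref{eq:tree-ratio} exhibits $\omega_{xy}$ as a ratio of polynomials that are affine in $\ell_e$ with nonnegative coefficients, and monotonicity follows because the energy $\mathcal{E}_\ell(j)$ is, for each fixed unit flow $j$, affine and nondecreasing in $\ell_e$, so the pointwise minimum over flows is nondecreasing. As a bonus, the envelope identity $\partialderiv{\omega_{xy}}{\ell_e} = \left(i_e^{xy}\right)^2$ that you extract is exactly the formula the paper records in the remark immediately following the proposition (citing Doyle--Snell, Exercise 1.4.7). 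Your second, spanning-tree-only route is also correctly set up --- the deletion--contraction identities $A = \tau(G/e;\ell)$, $B = \tau(G\setminus e;\ell)$ and the reduction of $AD \geq BC$ to $\omega_{xy}(G/e) \leq \omega_{xy}(G\setminus e)$ are right --- but as you note, finishing it purely combinatorially requires a weight-preserving injection on tree/forest pairs that is genuinely delicate (and finishing it via the flow comparison on $G/e$ versus $G\setminus e$ just re-imports Thomson's principle). Your instinct to lead with the Thomson argument, relegate the injection to a remark, and accept a citation as legitimate matches what the authors actually did.
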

For a proof, see Grimmet~\cite[Theorem 1.29]{grimmett} or Doyle and Snell~\cite[Chapter 1.4]{DS}.

\begin{rmk}
    In fact, we can be more precise concerning the value of $\displaystyle \partialderiv{\omega_{xy}}{\ell_e}$.
    Suppose we let $i_e^{xy}$ denote the amount of current flowing through edge $e$, when one unit of current is sent from $x$ to $y$.
    Then according to \cite[Exercise 1.4.7]{DS}, we have
    $\displaystyle
        \partialderiv{\omega_{xy}}{\ell_e} = \left(i_e^{xy}\right)^2.
    $
\end{rmk}

\section{Edge curvature}

In this section, we recall the definition of curvature based on effective resistance from the introduction, and explore some of its properties.
\begin{dfn*}[Ricci--Foster curvature]
    Suppose $(G, \ell)$ is an edge-weighted graph where $\ell_e$ denotes the resistance of $e$ for each edge in $E(G)$.
    Consider the {edge curvature} defined as follows:
    on each edge $e = uv$ let
    \begin{equation}\label{eq:curvature}
        \curv_{e} = \frac{1}{\deg_u} + \frac{1}{\deg_v} - \frac{\omega_{uv}}{\ell_{e}}
    \end{equation}
    where $\omega_{uv}$ denotes the effective resistance between vertices $u$ and $v$.
    We call $\curv_e$ the {\em Ricci--Foster curvature} of $e$.
\end{dfn*}

\begin{rmk}
    Devriendt and Lambiotte~\cite{DL} define the scalar curvature $p_u$ at a node $u$ as 
    \[
        p_u = 1 - \frac12\sum_{v : v\sim u} \frac{\omega_{uv}}{\ell_{uv}}.
    \]
    One could view our definition of Ricci--Foster curvature as arising from thinking of an edge $e=uv$ as consisting of two directed arcs $\overrightarrow{uv}$ and $\overleftarrow{uv}$ ($ = \overrightarrow{vu}$), and then assigning each directed arc the curvature 
    \[
        K_{\overrightarrow{uv}} = \frac{1}{\deg_u} - \frac12\frac{\omega_{uv}}{\ell_e},
    \] 
    from which we see that $K_e = K_{\overrightarrow{uv}}+K_{\overleftarrow{uv}}$.  
    Then we see that the scalar curvature at a node can be expressed as a sum of Ricci curvatures on the outgoing edges:
    \[
    p_u = \sum_{v : v \sim u}K_{\overrightarrow{uv}}.
    \]
\end{rmk}

\begin{rmk}
    The scalar curvature $p_u$ of \cite{DL} is further studied in \cite{DOS}, with a different normalizing factor, where the curvature is shown to satisfy nice properties analogous to the situation on manifolds.
\end{rmk}

\subsection{Examples}

\begin{eg}[Trees]
    If $G$ is a (weighted) tree, then $\omega_{uv} = \ell_{e}$ for every edge $uv = e$.
    Thus the edge curvature is
    \[
        \curv_{uv} = \frac1{\deg_u} + \frac1{\deg_v} - 1.
    \]
    An edge has positive curvature if $\deg_u = 1$ or $\deg_v = 1$,
    and has zero curvature if $\deg_u = \deg_v = 2$.
    Otherwise, the edge curvature is negative.

    For an explicit example, the tree in Figure~\ref{fig:tree-curv} has its edge curvatures indicated.
    We highlight positively-curved edges in blue, and negatively-curved edges in red.
    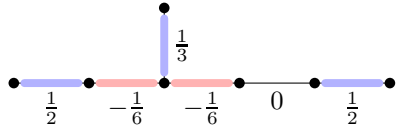
\begin{figure}[ht]
    \centering
    \begin{tikzpicture}
        \coordinate (A) at (0,0);
        \coordinate (B) at (1,0);
        \coordinate (C) at (2,0);
        \coordinate (D) at (3,0);
        \coordinate (E) at (4,0);
        \coordinate (F) at (5,0);
        \coordinate (G) at (2,1);
        
        \draw (A) --node[below] {$\frac12$} (B) --node[below] {$-\frac16$} (C) 
            --node[below] {$-\frac16$} (D) --node[below] {$0$} (E) --node[below] {$\frac12$} (F);
        \draw (C) --node[right] {$\frac13$} (G);
        \foreach \p in {A, B, C, D, E, F, G} {
            \fill (\p) circle (2pt);
        }
		\begin{scope}[color=blue!30,opacity=0.5,line width=3pt,line cap=round,shorten <=4pt, shorten >=4pt]
			\draw (A) -- (B);
			\draw (C) -- (G);
			\draw (E) -- (F);
		\end{scope}
		\begin{scope}[color=red!30,opacity=0.5,line width=3pt,line cap=round,shorten <=4pt, shorten >=4pt]
			\draw (B) -- (C);
			\draw (C) -- (D);
		\end{scope}
    \end{tikzpicture}
    \caption{A tree with its Ricci--Foster edge curvatures.}
    \label{fig:tree-curv}
    \end{figure}
\end{eg}

\begin{eg}[Infinite regular tree]
\label{eg:infinite-tree}
    Suppose $G$ is the infinite regular tree where each vertex has degree $d$.
    Then for each edge,
    \[
        \curv_e = \frac{2}{d} - 1 = \frac{2 - d}{d}.
    \]
    If $d \geq 3$ then each edge has negative curvature.
\end{eg}

\begin{eg}[Cycles]
    If $G$ is a cycle on $n$ vertices and edges, then $\curv_e = \frac{1}{n}$ for each $n$.
    We omit the calculation, which is simplified by applying Proposition~\ref{prop:curvature-sum}.
    
    More generally, if $G$ is a cycle with edge resistances $\{\ell_1, \ldots, \ell_n\}$, then 
    the curvature of the $i$-th edge is
    \[
        \curv_i = \frac12 + \frac12 - \frac{\omega_i}{\ell_i} = \frac{\ell_i}{\sum_{j \in E} \ell_{j}}.
    \]
    In other words, the curvature of each edge is proportional to its length.
\end{eg}

\begin{eg}[Barbell graph]\label{eg:barbell}
    Suppose $G$ is the graph shown in Figure~\ref{fig:barbell}.
    The single bridge edge has negative curvature, while the other edges have positive curvature.
    \begin{figure}[ht]
    \centering
    \raisebox{-0.5\height}{\begin{tikzpicture}
    	\coordinate (1) at (-1.2,0.8);
	  \coordinate (2) at (-1.2,-0.8);
    	\coordinate (3) at (0,0);
	    \coordinate (4) at (2,0);
    	\coordinate (5) at (3.2,-0.8);
        \coordinate (6) at (3.2,0.8);
	
	  \foreach \c in {1,2,3,4,5,6} {
	   	\filldraw[black] (\c) circle (2pt);
    	}
    
        \draw (3) --node[above right] {$\frac{1}{6}$} (1) --node[left] {$\frac{1}{3}$} (2) --node[below right] {$\frac{1}{6}$} cycle;
    	\draw (3) --node[above] {$-\frac{1}{3}$} (4);
        \draw (4) --node[below left] {$\frac{1}{6}$} (5) --node[right] {$\frac{1}{3}$} (6) --node[above left] {$\frac{1}{6}$} cycle;
		\begin{scope}[color=blue!30,opacity=0.5,line width=3pt,line cap=round,shorten <=4pt, shorten >=4pt]
			\draw (1) -- (2);
			\draw (1) -- (3);
			\draw (2) -- (3);
			\draw (4) -- (5);
			\draw (4) -- (6);
			\draw (5) -- (6);
		\end{scope}
		\begin{scope}[color=red!30,opacity=0.5,line width=3pt,line cap=round,shorten <=4pt, shorten >=4pt]
			\draw (3) -- (4);
        \end{scope}
    \end{tikzpicture}}
    \caption{Barbell graph and its edge curvatures.}
    \label{fig:barbell}
    \end{figure}
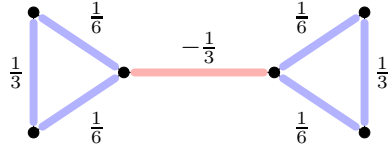
\end{eg}

\begin{eg}[House graph]
    Suppose $G$ is the graph shown in Figure~\ref{fig:house}.
    The edge curvatures are indicated in the figure next to each edge.
    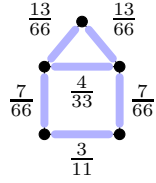
\begin{figure}[ht]
    \centering
    \raisebox{-0.5\height}{\begin{tikzpicture}
    	\coordinate (1) at (0,0);
	  \coordinate (2) at (1,0);
    	\coordinate (3) at (0,0.9);
	    \coordinate (4) at (1,0.9);
    	\coordinate (5) at (0.5,1.5);
	
	   \foreach \c in {1,2,3,4,5} {
	   	\filldraw[black] (\c) circle (2pt);
    	}
    
        \draw (1) --node[below] {$\frac{3}{11}$} (2) --node[right] {$\frac{7}{66}$} (4) 
            --node[below] {$\frac{4}{33}$} (3) --node[left] {$\frac{7}{66}$} cycle;
    	\draw (3) --node[above left] {$\frac{13}{66}$} (5) --node[above right] {$\frac{13}{66}$} (4);
		\begin{scope}[color=blue!30,opacity=0.5,line width=3pt,line cap=round,shorten <=4pt, shorten >=4pt]
			\draw (1) -- (2);
			\draw (1) -- (3);
			\draw (2) -- (4);
			\draw (3) -- (4);
			\draw (3) -- (5);
			\draw (4) -- (5);
		\end{scope}
    \end{tikzpicture}}
	
    
    \caption{House graph and its edge curvatures.}
    \label{fig:house}
    \end{figure}
\end{eg}

\subsection{Properties}

In this section, we note some basic properties of the Ricci--Foster curvature.

\begin{prop}\label{prop:curvature-bounds}
    For any edge $e$, the curvature satisfies $|\curv_e| \leq 1$.
\end{prop}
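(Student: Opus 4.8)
The plan is to control the two pieces of $\curv_e = \tfrac1{\deg_u} + \tfrac1{\deg_v} - \tfrac{\omega_{uv}}{\ell_e}$ separately, using the elementary bounds $0 < \tfrac{\omega_{uv}}{\ell_e} \le 1$ together with the fact that $\tfrac1{\deg_u} + \tfrac1{\deg_v}$ lies in $(0,2]$, and then a short case analysis on the degrees to close the gap in the upper bound.

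First I would record that $0 < \tfrac{\omega_{uv}}{\ell_e} \le 1$. The right-hand inequality is exactly~\eqref{eq:resistance-ratio} (with the stated convention $\omega_{uv} = \ell_e$ when $e$ is a bridge), since $\omega_{uv}(G\setminus e) \ge 0$; the left-hand inequality holds because $u$ and $v$ lie in a common connected component. Combined with $\tfrac1{\deg_u} + \tfrac1{\deg_v} > 0$, this yields the lower bound immediately:
\[
  \curv_e = \frac1{\deg_u} + \frac1{\deg_v} - \frac{\omega_{uv}}{\ell_e} \ge 0 - 1 = -1 .
\]

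For the upper bound I would split on whether $e$ has a leaf endpoint. If $\deg_u \ge 2$ and $\deg_v \ge 2$, then $\tfrac1{\deg_u} + \tfrac1{\deg_v} \le 1$, so $\curv_e \le 1 - \tfrac{\omega_{uv}}{\ell_e} \le 1$. If instead some endpoint — say $u$ — has $\deg_u = 1$, then the unique edge incident to $u$ is $e$, so $e$ is a bridge and $\omega_{uv} = \ell_e$; hence $\curv_e = 1 + \tfrac1{\deg_v} - 1 = \tfrac1{\deg_v} \le 1$. Either way $\curv_e \le 1$, and together with the previous paragraph this gives $|\curv_e| \le 1$.

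I do not anticipate a genuine obstacle: the argument reduces to a pair of one-line estimates plus a little bookkeeping. The only point demanding care is the degenerate leaf case, where one must invoke the convention (equivalently, the parallel-combination formula in the limit $\omega_{uv}(G\setminus e)\to\infty$) that $\omega_{uv} = \ell_e$ for a bridge; without that observation the crude bound $\tfrac1{\deg_u} + \tfrac1{\deg_v} \le 2$ would be too weak to conclude.
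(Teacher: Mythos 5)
Your proof is correct and follows essentially the same route as the paper's: the lower bound via $\tfrac{\omega_{uv}}{\ell_e}\le 1$ from~\eqref{eq:resistance-ratio}, and the upper bound via the same two-case split on whether an endpoint is a leaf, using the bridge convention $\omega_{uv}=\ell_e$ in the degenerate case. No substantive differences to report.
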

\begin{proof}
    For a lower bound on curvature, we have
    \[
        \curv_e = \frac{1}{\deg_u} + \frac{1}{\deg_v} - \frac{\omega_{uv}}{\ell_{e}} \geq - \frac{\omega_{uv}}{\ell_{e}} \geq -1.
    \]
    The last inequality follows from \eqref{eq:resistance-ratio}.
    
    As an upper bound for curvature, we consider two cases.
    First suppose that some endpoint of $e$ has degree one;
    we may assume by symmetry that $\deg_u = 1$.
    In this case, $e$ is a bridge edge, which means that $\frac{\omega_{uv}}{\ell_e} = 1$.
    Thus
    \[
        \curv_e = \frac{1}{\deg_u} + \frac{1}{\deg_v} - \frac{\omega_{uv}}{\ell_{e}} = \frac{1}{\deg_v} \leq 1.
    \]
    In the second case, suppose both $\deg_u \geq 2$ and $\deg_v \geq 2$.
    Then
    \[
        \curv_e = \frac{1}{\deg_u} + \frac{1}{\deg_v} - \frac{\omega_{uv}}{\ell_{e}} \leq \frac{1}{\deg_u} + \frac{1}{\deg_v} \leq 1
    \]
    as desired.
    Note that the resistance $\ell_e$ and effective resistance $\omega_{uv}$ are always positive (assuming $u \neq v$).
\end{proof}

\begin{prop}
\label{prop:curvature-sum}
    For any finite weighted graph, the sum of Ricci--Foster curvatures over all edges is 1.
\end{prop}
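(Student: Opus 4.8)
The plan is to split the edge sum into a ``degree part'' and a ``resistance part'':
\[
    \sum_{e \in E} \curv_e \;=\; \sum_{uv \in E}\!\left(\frac{1}{\deg_u}+\frac{1}{\deg_v}\right) \;-\; \sum_{uv \in E}\frac{\omega_{uv}}{\ell_e},
\]
and to evaluate each piece separately. I would handle the first sum by double counting over vertices: the term $1/\deg_w$ occurs exactly $\deg_w$ times in the sum (once for each edge incident to $w$), so it contributes $1$ for every vertex, and the first sum equals $|V| = n$. Here I am using that a connected graph on $\geq 2$ vertices has no isolated vertex, so every $\deg_w$ is positive; connectedness is implicit throughout the paper.

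The second sum is precisely the quantity in Foster's network theorem, which states that $\sum_{uv\in E}\omega_{uv}/\ell_e = n - 1$ for a connected graph on $n$ vertices. I would either invoke this classical fact directly, or derive it from Definition~\ref{dfn:eff-res}: the spanning-tree formula shows that $\omega_{uv}/\ell_e$ equals the probability $\Pr(e \in T)$ that the edge $e = uv$ belongs to a random spanning tree $T$ drawn with probability proportional to $\prod_{f \in T} \ell_f^{-1}$. Indeed, contracting $e$ identifies $u$ and $v$ and sets up a bijection between spanning trees of $G$ through $e$ and spanning trees of $G/uv$, and tracking the weights in $\tau(\cdot\,;\ell)$ turns the ratio $\tau(G/uv;\ell)/\tau(G;\ell)$ into this probability divided by $\ell_e$. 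Summing over all edges and using linearity of expectation gives $\sum_{e}\Pr(e \in T) = \mathbb{E}\,|T| = n - 1$, since every spanning tree has exactly $n-1$ edges.

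Putting the two computations together yields $\sum_{e\in E}\curv_e = n - (n-1) = 1$. The double-counting step is routine bookkeeping; the only real content is Foster's theorem, so that identity (or its short spanning-tree proof) is where the work lies. For a disconnected graph the same argument shows that the sum equals the number of connected components containing at least one edge, which is why the value $1$ requires the graph to be connected.
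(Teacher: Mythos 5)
Your proof is correct and takes essentially the same approach as the paper: split $\sum_e \curv_e$ into the degree part, which double-counts to $|V|$, and the resistance part, which equals $|V|-1$ by Foster's theorem (the paper simply cites Foster for this identity, whereas you additionally sketch its standard random-spanning-tree proof). The extra observations about connectedness and the disconnected case are accurate but not needed for the statement as the paper uses it.
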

\begin{proof}
    We have
    \begin{align*}
        \sum_{e \in E} \curv_e &= \sum_{e \in E} \left(\frac1{\deg(e^+)} + \frac1{\deg(e^-)} - \frac{\omega_e}{\ell_e}\right) \\
        &= \sum_{e \in E} \left(\frac1{\deg(e^+)} + \frac1{\deg(e^-)}\right) - \sum_{e \in E} \frac{\omega_e}{\ell_e} \\
        &= \sum_{v \in V} \left(\sum_{e \in \cN(v)} \frac{1}{\deg_v}\right) - \sum_{e \in E} \frac{\omega_e}{\ell_e} 
        = |V| - \sum_{e \in E} \frac{\omega_e}{\ell_e}.
    \end{align*}
    In the last line, we use $\cN(v)$ to denote the set of edges incident to $v$.
    Thus it remains to show that
    \[
        \sum_{e \in E} \frac{\omega_e}{\ell_e} = |V| - 1.
    \]
    This is a result of Foster~\cite{foster}.
\end{proof}

The previous result explains our choice of naming the quantity \eqref{eq:curvature} the ``Ricci--Foster'' curvature.
For the rest of the paper, we will refer to the Ricci--Foster curvature simply as the curvature of an edge in a graph.
Note that for Proposition~\ref{prop:curvature-sum} to hold, it is necessary to assume that the graph is finite;
for an infinite-graph counterexample, see Example~\ref{eg:infinite-tree}.

\begin{prop}\label{prop:curv-rescaling}
    The curvature $\curv_e$ does not change under uniform rescaling of all edge resistances ${ \{\ell_f : f \in E\} }$.
\end{prop}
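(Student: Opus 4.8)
The plan is to split $\curv_e = \frac{1}{\deg_u}+\frac{1}{\deg_v}-\frac{\omega_{uv}}{\ell_e}$ according to which terms depend on the resistances. The first two summands are purely combinatorial: they depend only on the underlying graph $G$, not on the weights $\{\ell_f\}$. So the whole content of the statement is that the ratio $\omega_{uv}/\ell_e$ is unchanged under the substitution $\ell_f \mapsto c\,\ell_f$ (for a fixed $c>0$ applied to every edge simultaneously). Since the denominator becomes $c\,\ell_e$, it suffices to show that the numerator scales the same way, i.e. $\omega_{uv}(c\ell) = c\,\omega_{uv}(\ell)$.

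To establish this I would invoke the spanning-tree formula of Definition~\ref{dfn:eff-res}, namely $\omega_{uv} = \tau(G/uv;\ell)/\tau(G;\ell)$ with $\tau(H;\ell) = \sum_{T}\prod_{f\notin T}\ell_f$. Working in the connected component of $G$ containing $e$ (where $\omega_{uv}$ is finite), every spanning tree of $G$ omits exactly $|E|-|V|+1$ edges, while every spanning tree of the contraction $G/uv$ omits exactly $|E|-|V|+2$ edges — here $G/uv$ has $|V|-1$ vertices and the same $|E|$ edges, with $e$ now a loop that is always among the omitted edges. Hence each monomial $\prod_{f\notin T}\ell_f$ in $\tau(G;\ell)$ is homogeneous of degree $|E|-|V|+1$ in the variables $\ell_f$, and each monomial in $\tau(G/uv;\ell)$ is homogeneous of degree $|E|-|V|+2$. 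Therefore $\tau(G;c\ell)=c^{\,|E|-|V|+1}\tau(G;\ell)$ and $\tau(G/uv;c\ell)=c^{\,|E|-|V|+2}\tau(G/uv;\ell)$, and dividing yields $\omega_{uv}(c\ell)=c\,\omega_{uv}(\ell)$, as required. Combining, $\frac{\omega_{uv}(c\ell)}{c\,\ell_e}=\frac{\omega_{uv}(\ell)}{\ell_e}$, so $\curv_e$ does not change.

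Finally I would address the degenerate case where $e$ is a bridge: the convention fixed in Section~\ref{sec:res} sets $\omega_{uv}=\ell_e$ there, which is obviously homogeneous of degree $1$, so the conclusion holds in that case too. (Alternatively, one can sidestep the case analysis entirely by using the physical description of effective resistance: multiplying every resistor by $c$ multiplies the voltage required to drive a unit current from $u$ to $v$ by $c$, hence multiplies $\omega_{uv}$ by $c$.) I do not anticipate any genuine obstacle; the only point requiring care is the exponent bookkeeping — correctly counting the non-tree edges of $G$ versus $G/uv$, and remembering that the contracted edge $e$ survives as a loop counted among the non-tree edges of $G/uv$.
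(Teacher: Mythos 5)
Your proposal is correct and follows essentially the same route as the paper: reduce to showing $\omega_{uv}/\ell_e$ is scale-invariant, and deduce degree-one homogeneity of $\omega_{uv}$ from the spanning-tree formula of Definition~\ref{dfn:eff-res}. The paper merely asserts that homogeneity follows from that definition, whereas you supply the exponent bookkeeping ($|E|-|V|+1$ versus $|E|-|V|+2$ omitted edges), which is a correct elaboration of the same argument.
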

\begin{proof}
    From~\eqref{eq:curvature} it suffices to show that $\omega_{uv} / \ell_e$ is invariant under uniform rescaling.
    The effective resistance $\omega_{uv} = \omega_{uv}(\ell_f : f \in E)$ is a homogeneous degree-one function of the edge resistances (e.g., doubling the resistance of every edge will double $\omega_{uv}$).
    This property of effective resistance follows from Definition~\ref{dfn:eff-res}.
    Therefore the ratio $\omega_{uv} / \ell_e$ is invariant under uniform rescaling.
\end{proof}

We next consider how the curvature of an edge changes as a result of changing the length of a single (possibly different) edge.
In words, we find that the resistance of any edge is positively correlated with its curvature,
and negatively correlated with the curvature of any other edge.

\begin{prop}\label{prop:curvature-partials}
    \hfill
    \begin{enumerate}[label=(\alph*)]
        \item For a fixed edge $e$, we have
        $\displaystyle
            \partialderiv{}{\ell_e} \curv_e \geq 0.
        $
        \item For distinct edges $e \neq f$, we have
        $\displaystyle
            \partialderiv{}{\ell_f} \curv_e \leq 0.
        $
    \end{enumerate}
\end{prop}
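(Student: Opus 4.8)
The plan is to reduce both statements to known monotonicity properties of effective resistance. The point is that in the definition $\curv_e = \frac1{\deg_u} + \frac1{\deg_v} - \frac{\omega_{uv}}{\ell_e}$ (with $e = uv$), the degree terms are combinatorial and do not depend on any edge resistance. Hence for every edge $f$, whether or not $f = e$,
\[
    \partialderiv{}{\ell_f}\curv_e = -\,\partialderiv{}{\ell_f}\!\left(\frac{\omega_{uv}}{\ell_e}\right),
\]
and everything comes down to the behavior of the ratio $\omega_{uv}/\ell_e$ as a function of a single resistance.

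For part (b), fix $f \neq e$. Then $\ell_e$ is constant with respect to $\ell_f$, so $\partialderiv{}{\ell_f}\bigl(\omega_{uv}/\ell_e\bigr) = \frac1{\ell_e}\partialderiv{\omega_{uv}}{\ell_f}$, which is nonnegative by Rayleigh's monotonicity law (Proposition~\ref{prop:rayleigh}). Therefore $\partialderiv{}{\ell_f}\curv_e \le 0$.

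For part (a), I would invoke the rewriting in~\eqref{eq:resistance-ratio},
\[
    \frac{\omega_{uv}}{\ell_e} = 1 - \frac{\ell_e}{\ell_e + \omega_{uv}(G\setminus e)},
\]
and observe the key fact that $\omega_{uv}(G\setminus e)$, the effective resistance between $u$ and $v$ in the edge-deleted graph, does not depend on $\ell_e$ at all. Writing $c := \omega_{uv}(G\setminus e) \in (0,\infty]$, one computes $\frac{d}{d\ell_e}\frac{\ell_e}{\ell_e + c} = \frac{c}{(\ell_e + c)^2} \ge 0$, so $\omega_{uv}/\ell_e$ is nonincreasing in $\ell_e$, giving $\partialderiv{}{\ell_e}\curv_e = -\partialderiv{}{\ell_e}\bigl(\omega_{uv}/\ell_e\bigr) \ge 0$. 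The bridge case $c = \infty$ is handled by noting that then $\omega_{uv}/\ell_e \equiv 1$, so the partial derivative is $0$.

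There is essentially no serious obstacle here; the only care needed is in treating $\omega_{uv}(G\setminus e)$ as a constant in $\ell_e$ and splitting off the bridge case. If one prefers a proof of (a) closer to Rayleigh, an alternative uses the current-squared formula $\partialderiv{\omega_{uv}}{\ell_e} = (i_e^{uv})^2$ from the Remark after Proposition~\ref{prop:rayleigh} together with the energy identity $\omega_{uv} = \sum_f \ell_f (i_f^{uv})^2 \ge \ell_e (i_e^{uv})^2$, which yields $\ell_e\,\partialderiv{\omega_{uv}}{\ell_e} \le \omega_{uv}$ and hence $\partialderiv{}{\ell_e}\bigl(\omega_{uv}/\ell_e\bigr) = \ell_e^{-2}\bigl(\ell_e\,\partialderiv{\omega_{uv}}{\ell_e} - \omega_{uv}\bigr) \le 0$.
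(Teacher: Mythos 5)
Your proof is correct and follows essentially the same route as the paper: part (a) via the rewriting \eqref{eq:resistance-ratio} with $\omega_{uv}(G\setminus e)$ treated as a constant in $\ell_e$, and part (b) via Rayleigh's monotonicity law. The only (harmless) differences are that in (b) you apply Rayleigh directly to $\omega_{uv}$ in $G$ rather than to $\omega_{uv}(G\setminus e)$ as the paper does --- which is if anything slightly more direct --- and that the paper explicitly disposes of loop edges, a degenerate case your computation covers by allowing $c = 0$.
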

\begin{proof}
    We first consider two special cases, when $e$ is either a loop edge or a bridge edge.
    If $e$ is a loop edge, we have $\frac{\omega_e}{\ell_e} = 0$ since the numerator vanishes, hence $\curv_e = \frac1{\deg(e^+)} + \frac1{\deg(e^-)}$ is constant.
    If $e$ is a bridge edge, we have $\frac{\omega_e}{\ell_e} = 1$ so again $\curv_e = \frac1{\deg(e^+)} + \frac1{\deg(e^-)} - 1$ is constant. 
    In both cases,  $\partialderiv{}{\ell_f} \curv_e = 0$ for $f = e$ and $f \neq e$, which satisfies parts (a) and (b).
    
    For the rest of the proof, suppose $e$ is not a loop or bridge edge, so that we can use the expressions \eqref{eq:resistance-ratio} for the effective resistance  ratio $\omega_e / \ell_e$.
    For part (a),
    \begin{align*}
        \partialderiv{}{\ell_e} \curv_e &= \partialderiv{}{\ell_e} \left(\frac{1}{\deg(e^+)} + \frac{1}{\deg(e^-)} - \frac{\omega_e(G \setminus e)}{\ell_e + \omega_e(G \setminus e)}\right) \\
        &= \frac{\omega_e(G \setminus e)}{(\ell_e + \omega_e(G \setminus e))^2} \geq 0.
    \end{align*}
    Here we use the fact that $\partialderiv{\omega_e(G \setminus e)}{\ell_e} = 0$, since the edge $e$ does not appear in $G \setminus e$.
    
    For part (b), since $e$ and $f$ are assumed distinct we have $\partialderiv{\ell_e}{\ell_f} = 0$.
    Thus
    \begin{align*}
        \partialderiv{}{\ell_f} \curv_e
        &= \partialderiv{}{\ell_f} \left(\frac{1}{\deg(e^+)} + \frac{1}{\deg(e^-)} - 1 + \frac{\ell_e}{\ell_e + \omega_e(G \setminus e)}\right) \\
        &= \partialderiv{}{\ell_f} \left( \frac{\ell_e}{\ell_e + \omega_e(G \setminus e)}\right) \\
        &= - \frac{\ell_e}{(\ell_e + \omega_e(G \setminus e))^2} \cdot \partialderiv{\omega_e(G \setminus e)}{\ell_f}  \leq 0.
    \end{align*}
    The last inequality applies Rayleigh's law (Proposition~\ref{prop:rayleigh}), 
    $\partialderiv{\omega_e(G \setminus e)}{\ell_f} \geq 0$.
\end{proof}

\subsection{Edge refinement}

In this section, we show that the Ricci--Foster curvature is nicely compatible with a notion of edge refinement on edge-weighted graphs.
If $(G, \ell)$ is an edge-weighted graph, we say $(H, \ell')$ is obtained by {\em subdivision} of an edge $e = uv$ of $(G, \ell)$ if $H$ is obtained by adding two new edges $e_1=ux$ and $e_2=vx$ to $G\setminus e$ and assigning lengths $\ell_{e_1}$ and $\ell_{e_2}$ such that $\ell_e = \ell_{e_1} + \ell_{e_2}$. 

An equivalence class of edge-weighted graphs under repeated edge refinement is often referred to as a ``metric graph'' or ``metrized graph'' or ``quantum graph''~\cite{friedlander,BR,BK}.

\begin{dfn}[Foster coefficient]
    Given a weighted graph $(G, \ell)$, the {\em Foster coefficient} of an edge $e \in E(G)$ is
    \[
        F(e) = \frac{\ell_e}{\ell_e + \omega_e(G \setminus e)}.
    \]
\end{dfn}
In terms of Foster coefficients, the curvature satisfies
\begin{equation}\label{eq:curv-foster}
    \curv_e = \frac{1}{\deg(e^+)} + \frac{1}{\deg(e^-)} - 1 + F(e).
\end{equation}
Foster's work~\cite{foster} implies that $\sum_e F(e) = |E| - |V| + 1$.

\begin{lem}\label{lem:foster-subdiv}
    Suppose $G$ contains an edge $e$, and $H$ is obtained from $G$ by subdividing $e$ into two edges $e_1 \cup e_2$.
    Then $F^G(e) = F^H(e_1) + F^H(e_2)$.
\end{lem}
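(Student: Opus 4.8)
\emph{Proof plan.} The plan is to compute all three Foster coefficients $F^G(e)$, $F^H(e_1)$, $F^H(e_2)$ in closed form in terms of the single quantity they have in common --- namely $\omega_{uv}(G\setminus e)$, where $e = uv$ --- and then observe that the relation $\ell_e = \ell_{e_1} + \ell_{e_2}$ forces the claimed identity.

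First I would unwind the definition to get $F^G(e) = \frac{\ell_e}{\ell_e + \omega_{uv}(G\setminus e)}$. Next I would analyze the graph $H\setminus e_1$. Since $H = (G\setminus e)\cup\{e_1 = ux,\, e_2 = vx\}$, deleting $e_1$ leaves $(G\setminus e)\cup\{e_2\}$, a graph in which the new vertex $x$ has degree one, its only incident edge being $e_2$. Thus $x$ is a pendant vertex hanging off $v$, and when one unit of current is sent from $u$ to $x$ it must all pass through $e_2$; hence the effective resistance between the endpoints of $e_1$ in $H\setminus e_1$ is a series combination,
\[
    \omega_{e_1}(H\setminus e_1) \;=\; \omega_{ux}(H\setminus e_1) \;=\; \ell_{e_2} + \omega_{uv}(G\setminus e).
\]
Substituting this and using $\ell_{e_1} + \ell_{e_2} = \ell_e$ in the denominator gives $F^H(e_1) = \frac{\ell_{e_1}}{\ell_e + \omega_{uv}(G\setminus e)}$, and the symmetric argument (swapping $u \leftrightarrow v$ and $e_1 \leftrightarrow e_2$) gives $F^H(e_2) = \frac{\ell_{e_2}}{\ell_e + \omega_{uv}(G\setminus e)}$. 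Adding these and applying $\ell_{e_1} + \ell_{e_2} = \ell_e$ once more in the numerator yields $F^H(e_1) + F^H(e_2) = \frac{\ell_e}{\ell_e + \omega_{uv}(G\setminus e)} = F^G(e)$.

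I do not anticipate any genuine obstacle here --- the argument is a short computation --- so the only thing worth being careful about is the degenerate cases, and the one geometric observation the computation rests on. If $e$ is a bridge of $G$, then $e_1$ and $e_2$ are bridges of $H$, $\omega_{uv}(G\setminus e) = +\infty$, and the series identity above persists in the extended reals, so all three Foster coefficients equal $0$, consistent with the convention fixed just after~\eqref{eq:resistance-ratio}; if $e$ is a loop (so $u=v$), then $\omega_{uv}(G\setminus e) = 0$ and the same computation gives $F^G(e) = F^H(e_1) + F^H(e_2) = 1$. The step to get right in every case is the claim that $x$ becomes a leaf once $e_1$ (respectively $e_2$) is deleted, since that is exactly what reduces the effective-resistance computation on $H\setminus e_i$ to a series combination over $G\setminus e$.
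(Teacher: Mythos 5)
Your proposal is correct and follows essentially the same route as the paper's proof: both compute $\omega_{ux}(H\setminus e_1) = \ell_{e_2} + \omega_{uv}(G\setminus e)$ (and symmetrically for $e_2$), substitute into the definition of the Foster coefficient, and conclude from $\ell_e = \ell_{e_1}+\ell_{e_2}$. Your explicit justification of the series identity via the pendant vertex $x$, and your check of the bridge and loop cases, are additional care the paper omits but do not change the argument.
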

\begin{proof}
    Suppose $e = uv$, and the subdivided graph $H$ is shown in Figure~\ref{fig:edge-subdiv}, with $x$ the new vertex after subdivision.
    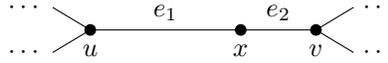
\begin{figure}[ht]
    \centering
    \begin{tikzpicture}
        \coordinate (A) at (0,0);
        \coordinate (B) at (2,0);
        \coordinate (C) at (3,0);

        \draw (A) --node[above]{$e_1$} (B) --node[above]{$e_2$} (C);
        \draw (A) -- +(-0.5,0.3) node[left] {$\cdots$};
        \draw (A) -- +(-0.5,-0.3) node[left] {$\cdots$};
        \draw (C) -- +(0.5,0.3) node[right] {$\cdots$};
        \draw (C) -- +(0.5,-0.3) node[right] {$\cdots$};
        \foreach \c in {A,B,C} {
            \filldraw (\c) circle (2pt);
        }
        \node[below=0.2em] at (A) {$u$};
        \node[below=0.2em] at (B) {$x$};
        \node[below=0.2em] at (C) {$v$};
    \end{tikzpicture}
    \caption{Edge after subdivision.}
    \label{fig:edge-subdiv}
    \end{figure}

    We have 
    \[
    \omega_{ux}(H \setminus e_1) = \ell_{e_2} + \omega_{uv}(G \setminus e)
    \qquad\text{and}\qquad
    \omega_{xv}(H \setminus e_2) = \ell_{e_1} + \omega_{uv}(G \setminus e).
    \]
    Thus
    \begin{align*}
        F^H(e_1) = \frac{\ell_{e_1}}{\ell_{e_1} + \omega_{ux}(H \setminus e_1)}
        &= \frac{\ell_{e_1}}{\ell_{e_1} + \ell_{e_2} + \omega_{uv}(G \setminus e)}
        = \frac{\ell_{e_1}}{\ell_e + \omega_{uv}(G \setminus e)},
    \end{align*}
    and similarly $\displaystyle F^H(e_2) = \frac{\ell_{e_2}}{\ell_e + \omega_{uv}(G \setminus e)}$.
    The result follows, since $\ell_e = \ell_{e_1} + \ell_{e_2}$.
\end{proof}

\begin{rmk}
    The previous lemma also follows from the fact that on the underlying metric graph of $(G, \ell)$, there exists an absolutely continuous measure $\mu$ such that the Foster coefficient $F(e)$ is equal to $\mu(e)$.
    (The measure $\mu(A)$ is defined for all Borel sets of the metric graph.)
    The measure $\mu$ is the continuous part of the {\em canonical measure} of $(G, \ell)$~\cite{BR}, a construction which was motivated by arithmetic geometry~\cite{CR}.    
\end{rmk}

\begin{thm}\label{thm:curv-subdiv}
    Suppose $G$ contains an edge $e$, and $H$ is obtained from $G$ by subdividing $e$ into two edges $e_1 \cup e_2$.
    Then $\curv^G_e = \curv^H_{e_1} + \curv^H_{e_2}$.
\end{thm}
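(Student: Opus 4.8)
The plan is to reduce the statement to a purely combinatorial identity about the degree terms plus Lemma~\ref{lem:foster-subdiv}, which already handles the Foster-coefficient part. Using the reformulation \eqref{eq:curv-foster} of curvature in terms of Foster coefficients, I would write
\[
    \curv^H_{e_1} + \curv^H_{e_2}
    = \left(\frac{1}{\deg^H(u)} + \frac{1}{\deg^H(x)} - 1 + F^H(e_1)\right)
    + \left(\frac{1}{\deg^H(x)} + \frac{1}{\deg^H(v)} - 1 + F^H(e_2)\right).
\]
By Lemma~\ref{lem:foster-subdiv}, $F^H(e_1) + F^H(e_2) = F^G(e)$, so it remains to check that the remaining terms collapse to $\frac{1}{\deg^G(u)} + \frac{1}{\deg^G(v)} - 1 = \curv^G_e - F^G(e)$.

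The key observation about degrees is that subdivision does not change the degree of $u$ or $v$ — they each still have exactly one edge of the new pair incident to them in place of $e$ — so $\deg^H(u) = \deg^G(u)$ and $\deg^H(v) = \deg^G(v)$. Meanwhile the new vertex $x$ has degree exactly $2$ in $H$, so $\frac{1}{\deg^H(x)} = \frac12$, and the two copies of this term sum to $\frac12 + \frac12 = 1$. Thus
\[
    \frac{1}{\deg^H(u)} + \frac{2}{\deg^H(x)} + \frac{1}{\deg^H(v)} - 2
    = \frac{1}{\deg^G(u)} + \frac{1}{\deg^G(v)} + 1 - 2
    = \frac{1}{\deg^G(u)} + \frac{1}{\deg^G(v)} - 1,
\]
which is exactly the non-Foster part of $\curv^G_e$. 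Combining this with the Foster identity from Lemma~\ref{lem:foster-subdiv} gives $\curv^H_{e_1} + \curv^H_{e_2} = \curv^G_e$.

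There is essentially no obstacle here: the only subtlety is bookkeeping the degree changes correctly (confirming $u,v$ retain their degrees and $x$ gets degree $2$), and being slightly careful if $e$ is a loop or a multi-edge, though the degree count still goes through since a subdivided loop at $u$ contributes two edges at $u$ both before (as a loop, counted twice) and after. The real content — that the effective-resistance ratios behave additively under subdivision — has already been isolated in Lemma~\ref{lem:foster-subdiv} via the explicit series-combination computation $\omega_{ux}(H\setminus e_1) = \ell_{e_2} + \omega_{uv}(G\setminus e)$, so this theorem is a short corollary.
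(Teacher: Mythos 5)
Your proposal is correct and follows essentially the same route as the paper: both use the reformulation \eqref{eq:curv-foster} of curvature via Foster coefficients, observe that $\deg_u$ and $\deg_v$ are unchanged while the new vertex $x$ contributes $\tfrac12 + \tfrac12 = 1$, and then invoke Lemma~\ref{lem:foster-subdiv} for the additivity of the Foster coefficients. No gaps.
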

\begin{proof}
    Suppose $e = uv$, and the subdivided graph $H$ is shown in Figure~\ref{fig:edge-subdiv}, with $x$ the new vertex after subdivision.
    
    By~\eqref{eq:curv-foster}, we have
    \begin{align*}
        \curv^H_{e_1} + \curv^H_{e_2} &= \left(\frac{1}{\deg_u} + \frac{1}{2} - 1 + F^H(e_1)\right) + \left(\frac{1}{2} + \frac{1}{\deg_v} - 1 + F^H(e_2)\right) \\
        &= \frac{1}{\deg_u} + \frac{1}{\deg_v} - 1 + F^H(e_1) + F^H(e_2),
    \end{align*}
    so it suffices to recall that $F^H(e_1) + F^H(e_2) = F^G(e)$ (Lemma~\ref{lem:foster-subdiv}).
    
\end{proof}

\begin{eg}[Subdividing edges and curvature]
    Consider the graph $(G, \ell)$ shown in Figure~\ref{fig:graph-subdiv}, with edge resistances indicated on the left.
    The edge curvatures are shown on the right.
    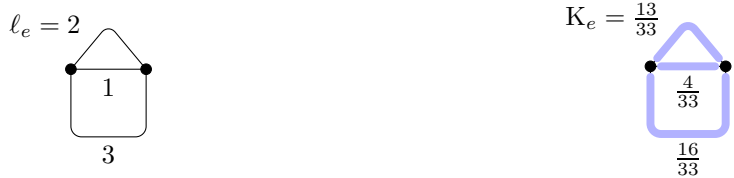
\begin{figure}[ht]
    \begin{minipage}{0.45\textwidth}
        \centering
        \begin{tikzpicture}
	  \coordinate (1) at (0,0);
	  \coordinate (2) at (1,0);
	  \coordinate (3) at (0,0.9);
	  \coordinate (4) at (1,0.9);
	  \coordinate (5) at (0.5,1.5);
	
        \foreach \c in {3,4} {
	  \filldraw[black] (\c) circle (2pt);
    	}
        
    	\draw[rounded corners] (3) -- (1) --node[below] {$3$} (2) -- (4);
        \draw[rounded corners] (4) --node[below] {$1$} (3);
    	\draw[rounded corners] (3) --node[above left] {$\ell_e = 2$} (5) -- (4);
        \end{tikzpicture}
    \end{minipage}
    \begin{minipage}{0.45\textwidth}
        \centering
        \begin{tikzpicture}
	  \coordinate (1) at (0,0);
	  \coordinate (2) at (1,0);
	  \coordinate (3) at (0,0.9);
	  \coordinate (4) at (1,0.9);
	  \coordinate (5) at (0.5,1.5);
	
        \foreach \c in {3,4} {
	  \filldraw[black] (\c) circle (2pt);
    	}
        
    	\draw[rounded corners] (3) -- (1) --node[below] {$\frac{16}{33}$} (2) -- (4);
        \draw[rounded corners] (4) --node[below] {$\frac{4}{33}$} (3);
    	\draw[rounded corners] (3) --node[above left] {$\curv_e = \frac{13}{33}$} (5) -- (4);
		\begin{scope}[color=blue!30,opacity=0.5,rounded corners,line width=3pt,line cap=round,shorten <=4pt, shorten >=4pt]
			\draw (3) -- (1) -- (2) -- (4);
			\draw (3) -- (4);
			\draw (3) -- (5) -- (4);
		\end{scope}
        \end{tikzpicture}
    \end{minipage}
    \caption{Graph with edge resistances (left) and curvatures (right).}
    \label{fig:graph-subdiv}
    \end{figure}

    The house graph, in Figure~\ref{fig:house}, is a repeated edge subdivision of $(G, \ell)$.
    It is straightforward to check that the curvatures on $(G, \ell)$ are equal to sums of the corresponding curvatures in Figure~\ref{fig:house}.
\end{eg}

\section{Ricci flow}

In this section, we define and study a flow on edge-weighted graphs that uses the edge curvatures from the previous section.
This flow is inspired by Ricci flow~\cite{hamilton} in geometry: starting with a smooth manifold, we may try making it ``rounder'' by shrinking the parts with positive curvature, and expanding the parts with negative curvature.

\begin{dfn}
    The {\em Ricci(--Foster) flow} on a weighted graph $(G, \ellzero)$ is the system of differential equations on $\{\ell_e(t) : e \in E(G)\}$ defined by
    \begin{equation}\label{eq:ricci-flow}
        \tderiv{} \ell_{e}(t) = - \curv_{e}(t) 
        \qquad\text{and}\qquad
        \ell_e(0) = \ellzero_{e}
    \end{equation}
    for each edge $e \in E(G)$,
    where $\curv_{e}(t)$ depends on $\{\ell_f(t) : f \in E\}$ via \eqref{eq:curvature}.
\end{dfn}

From this definition, the intuition that we see from Proposition~\ref{prop:curvature-partials} is that applying the Ricci flow will have the effect of making the graph ``rounder," i.e. evening out extreme curvatures.  If an edge has very high positive curvature, the Ricci flow will decrease the length of that edge and thus decrease its curvature, and will likewise increase the length of a very negatively curved edge.

Earlier in Proposition~\ref{prop:curvature-bounds}, we showed that the curvature always lies in the range $-1 \leq \curv_e \leq 1$.
Thus under Ricci flow,
\[
    \ell_e(0) - t \leq \ell_e(t) \leq \ell_e(0) + t
\]
for all $t$ where the flow is defined.

\begin{prop}\label{prop:total-length-flow}
    Under Ricci flow, the total length of a graph decreases at rate one.
\end{prop}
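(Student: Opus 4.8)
The plan is to differentiate the total length $L(t) = \sum_{e \in E} \ell_e(t)$ directly and reduce to the curvature-sum identity. First I would note that by Theorem~\ref{thm:flow-existence}, on the interval where the flow is defined each $\ell_e(t)$ is a differentiable function of $t$, so $L(t)$ is differentiable as well and its derivative is obtained term by term.

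Next I would compute
\[
    \tderiv{} L(t) = \sum_{e \in E} \tderiv{} \ell_e(t) = -\sum_{e \in E} \curv_e(t),
\]
using the defining equation~\eqref{eq:ricci-flow} of Ricci--Foster flow in the middle step. The right-hand side is exactly the negative of the total Ricci--Foster curvature of the graph $(G, \ell(t))$ at time $t$.

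Finally I would invoke Proposition~\ref{prop:curvature-sum}, which states that for any finite weighted graph the sum of Ricci--Foster curvatures over all edges equals $1$; applying it to $(G, \ell(t))$ for each fixed $t$ gives $\sum_{e \in E} \curv_e(t) = 1$, hence $\tderiv{} L(t) = -1$. This establishes that the total length decreases at rate one.

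There is no real obstacle here: the only thing to be careful about is that Proposition~\ref{prop:curvature-sum} requires $G$ to be finite (as the excerpt notes, the infinite regular tree is a counterexample), but that hypothesis is implicit in the setting of the Ricci flow on a graph $(G, \ell)$ with finite edge set $E$. One could add a sentence remarking that this immediately gives the closed form $L(t) = L(0) - t$ on the maximal interval of existence, which is consistent with the bound $\ell_e(0) - t \le \ell_e(t) \le \ell_e(0) + t$ recorded just before the proposition.
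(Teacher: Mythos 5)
Your proposal is correct and follows exactly the paper's own argument: the paper's proof is a one-line appeal to Proposition~\ref{prop:curvature-sum}, and you have simply written out the intermediate steps (termwise differentiation of $\sum_e \ell_e(t)$ and substitution of the flow equation) that the paper leaves implicit. Nothing further is needed.
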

(We use ``length'' and ``resistance'' interchangeably for $\ell_e$.)
\begin{proof}
    From Proposition~\ref{prop:curvature-sum}, we know that the edge curvatures $\curv_e$ sum to one.
\end{proof}

We record the following result for later use.
\begin{lem}\label{lem:curvature-flow}
    Under Ricci flow,
    $\displaystyle
        \tderiv{} \curv_e = - \frac{\omega_e}{\ell_e^2} \curv_e - \frac{1}{\ell_e} \tderiv{\omega_e}
    $.
\end{lem}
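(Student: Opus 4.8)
The plan is to differentiate the defining formula~\eqref{eq:curvature} for $\curv_e$ directly in $t$, using the quotient rule, and then substitute the flow equation~\eqref{eq:ricci-flow}. The first observation is that in
$\curv_e = \frac{1}{\deg(e^+)} + \frac{1}{\deg(e^-)} - \frac{\omega_e}{\ell_e}$
the two degree terms are purely combinatorial and do not depend on any edge resistance, so they contribute nothing to $\tderiv{}\curv_e$; only the ratio $\omega_e/\ell_e$ is time-dependent (through $\ell_e(t)$ and through $\omega_e$, which is a function of all the $\ell_f(t)$).

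Next I would apply the quotient rule to the surviving term, obtaining
\[
    \tderiv{}\curv_e = -\tderiv{}\!\left(\frac{\omega_e}{\ell_e}\right)
    = -\frac{1}{\ell_e}\tderiv{\omega_e} + \frac{\omega_e}{\ell_e^2}\tderiv{\ell_e}.
\]
Finally I would substitute $\tderiv{\ell_e} = -\curv_e$ from~\eqref{eq:ricci-flow} into the last term, which gives exactly
$\tderiv{}\curv_e = -\frac{\omega_e}{\ell_e^2}\curv_e - \frac{1}{\ell_e}\tderiv{\omega_e}$,
as claimed.

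The computation is entirely routine and presents essentially no obstacle: it is one application of the quotient rule followed by one substitution. The only points worth a brief comment are that $\omega_e$ and hence $\curv_e$ are differentiable functions of the $\ell_f$ along the flow (clear from Definition~\ref{dfn:eff-res}, as $\tau(G;\ell)$ is a polynomial in the resistances and the relevant denominators stay positive while the flow is defined), and that $\tderiv{\omega_e}$ may itself be expanded by the chain rule as $\tderiv{\omega_e} = \sum_{f \in E} \partialderiv{\omega_e}{\ell_f}\,\tderiv{\ell_f} = -\sum_{f\in E}\partialderiv{\omega_e}{\ell_f}\,\curv_f$; I would leave it in the unexpanded form $\tderiv{\omega_e}$ to match the statement of the lemma.
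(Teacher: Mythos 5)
Your proposal is correct and follows exactly the same route as the paper's proof: drop the constant degree terms, apply the quotient rule to $\omega_e/\ell_e$, and substitute $\tderiv{\ell_e} = -\curv_e$. The additional remarks on differentiability and the chain-rule expansion of $\tderiv{\omega_e}$ are fine but not needed for the lemma itself.
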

\begin{proof}
    Using the definition of curvature, we have
    \[
        \tderiv{} \curv_e = - \tderiv{} \left( \frac{\omega_e}{\ell_e}\right)
        = - \frac{1}{\ell_e} \tderiv{\omega_e} + \frac{\omega_e}{\ell_e^2} \tderiv{\ell_e}
        = - \frac{1}{\ell_e} \tderiv{\omega_e} - \frac{\omega_e}{\ell_e^2} \curv_e. \qedhere
    \]
\end{proof}

\subsection{Existence of Ricci flow}

In this section, we prove that the Ricci flow~\eqref{eq:ricci-flow} exists and is unique for an arbitrary initial graph $(G, \ellzero)$.
We apply the following theorem on ordinary differential equations.

\begin{thm}[Picard--Lindel\"{o}f]
Suppose $D \subset \RR \times \RR^n$ is a closed rectangle with $(t_0, y_0)$ in its interior, 
and suppose $f : D \to \RR^n$ is continuous in $t$ and Lipschitz continuous in $y$.
Then, there exists some $\epsilon > 0$ such that the initial value problem
\[
    \tderiv{} y(t)= f(t, y(t)), \qquad y(t_0) = y_0
\]
has a unique solution $y(t)$ on the interval $[t_0 - \epsilon, t_0 + \epsilon]$.
\end{thm}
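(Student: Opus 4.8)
The plan is to prove the Picard--Lindel\"of theorem via the Banach fixed-point theorem applied to the Picard integral operator. First I would recast the initial value problem as an integral equation: a continuous function $y : [t_0 - \epsilon, t_0 + \epsilon] \to \RR^n$ solves $\tderiv{} y = f(t, y)$ with $y(t_0) = y_0$ if and only if it satisfies
\[
    y(t) = y_0 + \int_{t_0}^{t} f(s, y(s))\, ds .
\]
The forward direction is the fundamental theorem of calculus, and the reverse direction follows by differentiating and evaluating at $t_0$. This reformulation is the key move, since it turns the differential problem into a fixed-point problem for the operator $\Phi(y)(t) = y_0 + \int_{t_0}^{t} f(s, y(s))\, ds$.

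Next I would build a complete metric space on which $\Phi$ acts. Since $D$ is a closed rectangle with $(t_0, y_0)$ in its interior, choose $a, b > 0$ so that $R = [t_0 - a, t_0 + a] \times \overline{B}(y_0, b) \subset D$. On the compact set $R$ the continuous map $f$ is bounded, say $|f| \le M$, and by hypothesis it is Lipschitz in $y$ with constant $L$. Let $X$ be the set of continuous functions $y : [t_0 - \epsilon, t_0 + \epsilon] \to \overline{B}(y_0, b)$ under the sup-norm metric, where $\epsilon \le a$ remains to be chosen. As a closed subset of the Banach space of continuous $\RR^n$-valued functions, $X$ is complete.

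Then I would verify the two hypotheses of the contraction mapping principle. For the self-map property, the estimate
\[
    |\Phi(y)(t) - y_0| \le M \, |t - t_0| \le M \epsilon
\]
shows $\Phi$ maps $X$ into $X$ provided $\epsilon \le b / M$. For the contraction property, the Lipschitz bound gives
\[
    \|\Phi(y_1) - \Phi(y_2)\|_\infty \le L \epsilon \, \|y_1 - y_2\|_\infty ,
\]
so $\Phi$ is a contraction whenever $\epsilon < 1/L$. Taking $\epsilon$ smaller than all of $a$, $b/M$, and $1/L$ satisfies every constraint simultaneously, and the Banach fixed-point theorem yields a unique fixed point $y \in X$, which by the reformulation is the unique solution on $[t_0 - \epsilon, t_0 + \epsilon]$.

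The step I expect to be the main obstacle is the simultaneous choice of $\epsilon$: it must be small enough that $\Phi$ both preserves the ball $\overline{B}(y_0, b)$ and strictly contracts. Establishing the self-map condition rests on the a priori bound $M$ for $f$ over the compact sub-rectangle $R$, which is precisely why shrinking from $D$ to $R$ is essential. A secondary point deserving care is that the fixed-point argument gives uniqueness only within $X$; to obtain uniqueness among \emph{all} solutions taking values in $\overline{B}(y_0, b)$ one applies a short Gr\"onwall-type estimate to the difference of two solutions.
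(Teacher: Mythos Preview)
Your proof sketch is correct and follows the standard contraction-mapping argument. Note, however, that the paper does not actually prove this theorem: it is quoted as a classical tool and the reader is referred to Teschl~\cite[Theorem 2.2]{teschl} for a proof. The approach you outline---recasting the IVP as an integral equation and applying the Banach fixed-point theorem to the Picard operator on a closed ball in $C([t_0-\epsilon,t_0+\epsilon];\RR^n)$---is precisely the standard argument one finds in that reference, so there is no substantive divergence to compare. Your caveat about uniqueness being a priori only within $X$ is well-placed; the usual remedy is either the Gr\"onwall estimate you mention or the observation that any continuous solution with $y(t_0)=y_0$ must lie in $\overline{B}(y_0,b)$ on a short subinterval, after which the contraction uniqueness and a continuation argument finish the job.
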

For a proof, see Teschl~\cite[Theorem 2.2]{teschl}.

\begin{proof}[Proof of Theorem~\ref{thm:flow-existence}]
    For each edge $e$, the curvature $\curv_e(t, \ell(t))$ is a rational function of the edge lengths $\{\ell_f: f \in E\}$ and time $t$.
    The denominator is nonzero when all edge lengths are positive.
    Thus for an initial choice of positive edge lengths $\ellzero$, there is a closed rectangle $D_0 \subset \RR^{E(G)}$ containing $\ellzero$ such that the denominator of $\curv_e$ does not vanish on $D_0$ for any $e$.
    On $D = \{t = 0\} \times D_0 \subset \RR \times \RR^{E(G)}$, the curvature $\curv_e(t, \ell)$ is differentiable with a continuous derivative.
    Since $D$ is compact, this implies that $\curv_e(t, \ell)$ is Lipschitz on $D$.
\end{proof}

\begin{rmk}[Flow on metric graphs]\label{rmk:flow-metric}
    In Theorem~\ref{thm:curv-subdiv}, we showed that the edge curvature is compatible with edge refinement.
    As a consequence, Ricci flow defines a flow on the space of metric graphs.
\end{rmk}

\subsection{Examples}

\begin{eg}[Trees]
    Consider the tree shown in Figure~\ref{fig:tree-curv}, which has unit edge resistances.
    Under Ricci flow, the lengths of the edges will change as indicated in Figure~\ref{fig:tree-flow}.
    The flow is defined on the interval $t \in [0, T)$ for $T = 2$.
    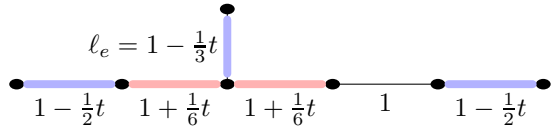
\begin{figure}[ht]
    \centering
    \begin{tikzpicture}[xscale=1.4]
        \coordinate (A) at (0,0);
        \coordinate (B) at (1,0);
        \coordinate (C) at (2,0);
        \coordinate (D) at (3,0);
        \coordinate (E) at (4,0);
        \coordinate (F) at (5,0);
        \coordinate (G) at (2,1);
        
        \draw (A) --node[below] {$1 - \frac12 t$} (B) --node[below] {$1 + \frac16 t$} (C) 
            --node[below] {$1 + \frac16 t$} (D) --node[below] {$1$} (E) --node[below] {$1 - \frac12 t$} (F);
        \draw (C) --node[left] {$\ell_e = 1 - \frac13 t$} (G);
        \foreach \p in {A, B, C, D, E, F, G} {
            \fill (\p) circle (2pt);
        }
		\begin{scope}[color=blue!30,opacity=0.5,line width=3pt,line cap=round,shorten <=4pt, shorten >=4pt]
			\draw (A) -- (B);
			\draw (C) -- (G);
			\draw (E) -- (F);
		\end{scope}
		\begin{scope}[color=red!30,opacity=0.5,line width=3pt,line cap=round,shorten <=4pt, shorten >=4pt]
			\draw (B) -- (C);
			\draw (C) -- (D);
		\end{scope}
    \end{tikzpicture}
    \caption{A tree under Ricci--Foster flow.}
    \label{fig:tree-flow}
    \end{figure}

    In general, on any tree $G$, the curvature $\curv_e$ depends only on the vertex degrees and not on the lengths $\ell_e$.
    Thus under Ricci flow, 
    \begin{itemize}
        \item edges incident to a leaf vertex (``terminal'' edges) will contract,
        \item non-terminal edges next to a branching vertex (degree $\geq 3$) will expand,
        \item all other edges will remain the same length.
    \end{itemize}
\end{eg}

\begin{eg}[Cycles]
    If $(G, \ell)$ is a cycle on $n$ vertices and edges, with edge lengths $\left\{\ellzero_1, \ldots, \ellzero_n\right\}$, then under Ricci flow each edge contracts at a rate proportional to its starting length.
    By Proposition~\ref{prop:curv-rescaling}, this implies that the curvatures remain constant over time, $\curv_e(t) = \curv_e^{(0)}$.
    Hence
    \[
        \ell_i(t) \;=\; \ellzero_i - \frac{\ellzero_i}{\ellzero_1 + \cdots + \ellzero_n} t
        \;=\; \ellzero_i \left( 1 - \lambda\, t\right) \qquad\text{where }\lambda = \frac{1}{\sum_j \ellzero_j}.
    \]
    This flow is defined on $t \in [0, T)$ for $T = \sum_i \ellzero_{i}$.

    In this case, the graph at time $t$ differs from the initial cycle by a global rescaling.
    A Riemannian manifold that has this property under (classical) Ricci flow is called an {\em Einstein manifold};
    see Section~\ref{sec:further} for more discussion.
\end{eg}

\begin{eg}[Barbell graph]
    Consider applying Ricci flow to the barbell graph in Figure~\ref{fig:barbell}, which has unit edge lengths.
    In light of Theorem~\ref{thm:curv-subdiv} and Remark~\ref{rmk:flow-metric}, we may replace the graph with its minimal metric graph representative, which has two vertices and three edges, with edge lengths as shown in Figure~\ref{fig:barbell-flow}.
    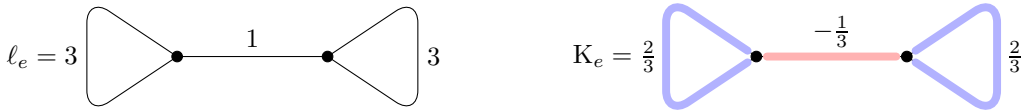
\begin{figure}[ht]
    \centering
    \raisebox{-0.5\height}{\begin{tikzpicture}
    	\coordinate (1) at (-1.2,0.8);
	  \coordinate (2) at (-1.2,-0.8);
    	\coordinate (3) at (0,0);
	    \coordinate (4) at (2,0);
    	\coordinate (5) at (3.2,-0.8);
        \coordinate (6) at (3.2,0.8);
	
	  \foreach \c in {3,4} {
	   	\filldraw[black] (\c) circle (2pt);
    	}
        \begin{scope}[rounded corners=10]
            \draw (3) -- (1) --node[left] {$\ell_e = 3$} (2) -- (3);
        	\draw (3) --node[above] {$1$} (4);
            \draw (4) -- (5) --node[right] {$3$} (6) -- (4);
            \end{scope}
    \end{tikzpicture}}
    \qquad\qquad
    \raisebox{-0.5\height}{\begin{tikzpicture}
    	\coordinate (1) at (-1.2,0.8);
	  \coordinate (2) at (-1.2,-0.8);
    	\coordinate (3) at (0,0);
	    \coordinate (4) at (2,0);
    	\coordinate (5) at (3.2,-0.8);
        \coordinate (6) at (3.2,0.8);
	
	  \foreach \c in {3,4} {
	   	\filldraw[black] (\c) circle (2pt);
    	}
        \begin{scope}[rounded corners=10]
            \draw (3) -- (1) --node[left] {$\curv_e = \frac{2}{3}$} (2) -- (3);
        	\draw (3) --node[above] {$-\frac{1}{3}$} (4);
            \draw (4) -- (5) --node[right] {$\frac{2}{3}$} (6) -- (4);
            \end{scope}
		\begin{scope}[opacity=0.5,rounded corners=10,line width=3pt,line cap=round,shorten <=4pt, shorten >=4pt]
			\draw[color=blue!30] (3) -- (1) -- (2) -- (3);
			\draw[color=blue!30] (4) -- (5) -- (6) -- (4);
			\draw[color=red!30] (3) -- (4);
        \end{scope}
    \end{tikzpicture}}
    \caption{Barbell graph and its edge curvatures.}
    \label{fig:barbell-flow}
    \end{figure}

    The edges curvatures are also indicated in the figure.
    Note that the edge curvatures depend only on the vertex degrees, and not on the lengths.
    Under Ricci flow, each loop edge has $\ell_e(t) = 3 - \frac23 t$, while the bridge edge has $\ell_e(t) = 1 + \frac13 t$.
    As $t \to T = \frac92$, each loop collapses to a point.
\end{eg}

\subsection{Ricci flow with surgery}

When applying Ricci flow to a graph $(G, \ell)$, we observed that the total length of the graph decreases over time (Proposition~\ref{prop:total-length-flow}). 
When the length of an edge reaches zero, Ricci flow is no longer defined on the original graph because we do not allow negative edge lengths.
However, we can consider continuing Ricci flow on the graph obtained by contracting the edge whose length vanishes.
This is reminiscent of the situation of Ricci flow on three-dimensional manifolds, where it is sometimes necessary to perform ``surgery'' near singularities~\cite{perelman-2}.

If we continue Ricci flow in this manner, contracting edges whenever their length approaches zero, then the flow starting from $(G, \ellzero)$ will continue until time $T = \sum_{e} \ellzero_e$, at which time it collapses to a single point.
This is a consequence of Proposition~\ref{prop:curvature-sum}.

\begin{eg}
    Consider the tree from Figure~\ref{fig:tree-curv}, which is shown at the top of Figure~\ref{fig:tree-flow-surgery}.
    The figure illustrates the process of Ricci flow starting from this tree, contracting an edge when its length becomes zero.
    The time axis is shown in the middle, pointing downward.
    On the initial graph, Ricci flow exists for time $0 \leq t < 2$, and at time $t = 2$ two terminal edges collapse.
    Additional edge collapses happen at times $t = 3$, $4$, $5$, and $6$.
    Each new graph after an edge collapse is shown along the left-hand side of the time axis.
    On the right-hand side, we show how edge lengths change under Ricci flow, and the sign of the curvature by color.
    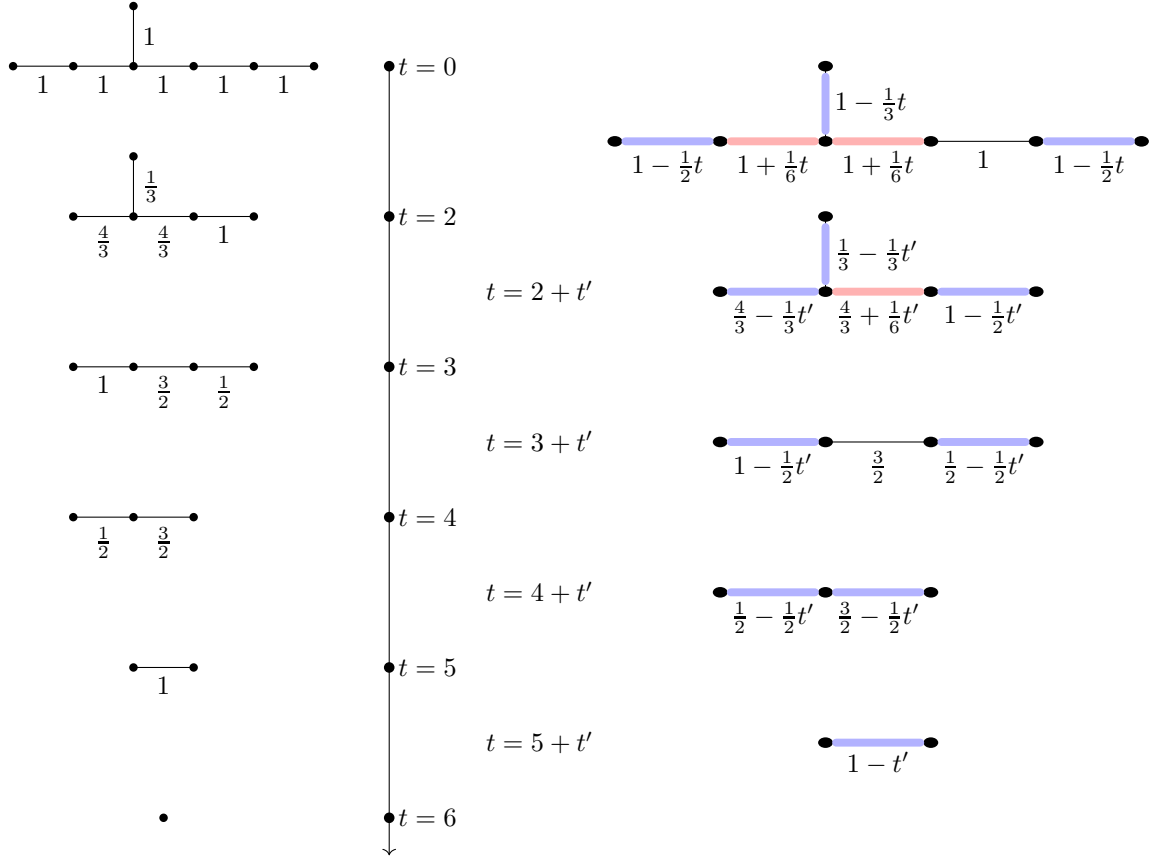
\begin{figure}[ht]
    \centering
    \begin{tikzpicture}[yscale=1.0]
        \draw (0,0) node[right] {$t = 0$} 
            -- (0,-2) node[right] {$t = 2$} 
            -- (0,-4) node[right] {$t = 3$}
            -- (0,-6) node[right] {$t = 4$}
            -- (0,-8) node[right] {$t = 5$}
            -- (0,-10) node[right] {$t = 6$};
        \draw[->] (0,-10) -- +(0,-0.5);
        \foreach \t in {0,-2,-4,-6,-8,-10} {
            \fill (0,\t) circle (2pt);
        }

    	\begin{scope}[shift={(-5,0)}, scale=0.8]
            \coordinate (A) at (0,0);
            \coordinate (B) at (1,0);
            \coordinate (C) at (2,0);
            \coordinate (D) at (3,0);
            \coordinate (E) at (4,0);
            \coordinate (F) at (5,0);
            \coordinate (G) at (2,1);
            
            \draw (A) --node[below] {$1$} (B) --node[below] {$1$} (C) 
                --node[below] {$1$} (D) --node[below] {$1$} (E) --node[below] {$1$} (F);
            \draw (C) --node[right] {$1$} (G);
            \foreach \p in {A, B, C, D, E, F, G} {
                \fill (\p) circle (2pt);
            }
        \end{scope}
        \begin{scope}[shift={(-5,-2)}, scale=0.8]
            \coordinate (B) at (1,0);
            \coordinate (C) at (2,0);
            \coordinate (D) at (3,0);
            \coordinate (E) at (4,0);
            \coordinate (G) at (2,1);
            
            \draw (B) --node[below] {$\frac43$} (C) 
                --node[below] {$\frac43$} (D) --node[below] {$1$} (E);
            \draw (C) --node[right] {$\frac13$} (G);
            \foreach \p in {B, C, D, E, G} {
                \fill (\p) circle (2pt);
            }
        \end{scope}
        \begin{scope}[shift={(-5,-4)}, scale=0.8]
            \coordinate (B) at (1,0);
            \coordinate (C) at (2,0);
            \coordinate (D) at (3,0);
            \coordinate (E) at (4,0);
            
            \draw (B) --node[below] {$1$} (C) 
                --node[below] {$\frac32$} (D) --node[below] {$\frac12$} (E);
            \foreach \p in {B, C, D, E} {
                \fill (\p) circle (2pt);
            }
        \end{scope}
        \begin{scope}[shift={(-5,-6)}, scale=0.8]
            \coordinate (B) at (1,0);
            \coordinate (C) at (2,0);
            \coordinate (D) at (3,0);
            
            \draw (B) --node[below] {$\frac12$} (C) 
                --node[below] {$\frac32$} (D);
            \foreach \p in {B, C, D} {
                \fill (\p) circle (2pt);
            }
        \end{scope}
        \begin{scope}[shift={(-5,-8)}, scale=0.8]
            \coordinate (C) at (2,0);
            \coordinate (D) at (3,0);
            
            \draw (C) --node[below] {$1$} (D);
            \foreach \p in {C, D} {
                \fill (\p) circle (2pt);
            }
        \end{scope}
        \begin{scope}[shift={(-5,-10)}, scale=0.8]
            \coordinate (C) at (2.5,0);
            \fill (C) circle (2pt);
        \end{scope}

        \begin{scope}[shift={(3,-1)}, xscale=1.4]
            \coordinate (A) at (0,0);
            \coordinate (B) at (1,0);
            \coordinate (C) at (2,0);
            \coordinate (D) at (3,0);
            \coordinate (E) at (4,0);
            \coordinate (F) at (5,0);
            \coordinate (G) at (2,1);
            
            \draw (A) --node[below] {$1 - \frac12 t$} (B) --node[below] {$1 + \frac16 t$} (C) 
                --node[below] {$1 + \frac16 t$} (D) --node[below] {$1$} (E) --node[below] {$1 - \frac12 t$} (F);
            \draw (C) --node[right] {$1 - \frac13 t$} (G);
            \foreach \p in {A, B, C, D, E, F, G} {
                \fill (\p) circle (2pt);
            }
    		\begin{scope}[color=blue!30,opacity=0.5,line width=3pt,line cap=round,shorten <=4pt, shorten >=4pt]
    			\draw (A) -- (B);
    			\draw (C) -- (G);
    			\draw (E) -- (F);
    		\end{scope}
    		\begin{scope}[color=red!30,opacity=0.5,line width=3pt,line cap=round,shorten <=4pt, shorten >=4pt]
    			\draw (B) -- (C);
    			\draw (C) -- (D);
    		\end{scope}
        \end{scope}
        \begin{scope}[shift={(2,-3)}]
            \node at (0,0) {$t = 2 + t'$};
        \end{scope}
        \begin{scope}[shift={(3,-3)}, xscale=1.4]
            \coordinate (B) at (1,0);
            \coordinate (C) at (2,0);
            \coordinate (D) at (3,0);
            \coordinate (E) at (4,0);
            \coordinate (G) at (2,1);
            
            \draw (B) --node[below] {$\frac43 - \frac13 t'$} (C) 
                --node[below] {$\frac43 + \frac16 t'$} (D) --node[below] {$1 - \frac12 t'$} (E);
            \draw (C) --node[right] {$\frac13 - \frac13 t'$} (G);
            \foreach \p in {B, C, D, E, G} {
                \fill (\p) circle (2pt);
            }
    		\begin{scope}[color=blue!30,opacity=0.5,line width=3pt,line cap=round,shorten <=4pt, shorten >=4pt]
    			\draw (B) -- (C);
                \draw (C) -- (G);
    			\draw (D) -- (E);
    		\end{scope}
    		\begin{scope}[color=red!30,opacity=0.5,line width=3pt,line cap=round,shorten <=4pt, shorten >=4pt]
    			\draw (C) -- (D);
    		\end{scope}
        \end{scope}
        \begin{scope}[shift={(2,-5)}]
            \node at (0,0) {$t = 3 + t'$};
        \end{scope}
        \begin{scope}[shift={(3,-5)}, xscale=1.4]
            \coordinate (B) at (1,0);
            \coordinate (C) at (2,0);
            \coordinate (D) at (3,0);
            \coordinate (E) at (4,0);
            
            \draw (B) --node[below] {$1 - \frac12 t'$} (C) 
                --node[below] {$\frac32$} (D) --node[below] {$\frac12 - \frac12 t'$} (E);
            \foreach \p in {B, C, D, E} {
                \fill (\p) circle (2pt);
            }
    		\begin{scope}[color=blue!30,opacity=0.5,line width=3pt,line cap=round,shorten <=4pt, shorten >=4pt]
    			\draw (B) -- (C);
    			\draw (D) -- (E);
    		\end{scope}
        \end{scope}
        \begin{scope}[shift={(2,-7)}]
            \node at (0,0) {$t = 4 + t'$};
        \end{scope}
        \begin{scope}[shift={(3,-7)}, xscale=1.4]
            \coordinate (B) at (1,0);
            \coordinate (C) at (2,0);
            \coordinate (D) at (3,0);
            
            \draw (B) --node[below] {$\frac12 - \frac12 t'$} (C) 
                --node[below] {$\frac32 - \frac12 t'$} (D);
            \foreach \p in {B, C, D} {
                \fill (\p) circle (2pt);
            }
    		\begin{scope}[color=blue!30,opacity=0.5,line width=3pt,line cap=round,shorten <=4pt, shorten >=4pt]
    			\draw (B) -- (C);
    			\draw (C) -- (D);
    		\end{scope}
        \end{scope}
        \begin{scope}[shift={(2,-9)}]
            \node at (0,0) {$t = 5 + t'$};
        \end{scope}
        \begin{scope}[shift={(3,-9)}, xscale=1.4]
            \coordinate (C) at (2,0);
            \coordinate (D) at (3,0);
            
            \draw (C) --node[below] {$1 - t'$} (D);
            \foreach \p in {C, D} {
                \fill (\p) circle (2pt);
            }
    		\begin{scope}[color=blue!30,opacity=0.5,line width=3pt,line cap=round,shorten <=4pt, shorten >=4pt]
    			\draw (C) -- (D);
    		\end{scope}
        \end{scope}
    \end{tikzpicture}
    \\
    \begin{tikzpicture}[xscale=1.4]
    \end{tikzpicture}
    \caption{A tree under Ricci flow with ``surgery.'' Edge contractions occur at the indicated times, and the tree collapses to a point at time $t = 6$. Edges are not drawn to scale.}
    \label{fig:tree-flow-surgery}
    \end{figure}
\end{eg}

\section{Nonnegatively curved graphs}

Recall that an edge-weighted graph $(G, \ell)$ is {\em nonnegatively curved} if every edge of $G$ has nonnegative Ricci--Foster curvature.
In this section, we show that Ricci flow preserves nonnegatively curved graphs.

\subsection{Some lemmas}

\begin{lem}\label{lem:res-homogeneous-pde}
    For any edge $e$, we have
    $\displaystyle
        \sum_{f \in E} \ell_f \frac{\partial\,\omega_e}{\partial\,\ell_f} = \omega_e
    $.
\end{lem}
\begin{proof}
    The effective resistance $\omega_e = \omega_e(\ell_1,\ldots,\ell_m)$, as a function of the edge lengths $\{\ell_f: f \in E\} = \{\ell_1, \ldots, \ell_m\}$ is homogeneous of degree one.
    In other words, for any positive real $\lambda$ we have
    \[
        \omega_e(\lambda\ell_1, \ldots, \lambda\ell_m) = \lambda\,\omega_e(\ell_1,\ldots,\ell_m).
    \]
    The claim follows from applying Euler's homogeneous function theorem.
\end{proof}


The following lemma concerns how the effective resistance $\omega_e$ evolves under Ricci flow.
Note that the conclusion in part (b) is a strengthening of part (a), since in (a) all terms $\omega_e$, $\ell_e$, and $\curv_e$ are nonnegative.
Also, note that part (b) does not make any assumptions on the sign of the curvature.
\begin{lem}\label{lem:res-flow-bound}
    Under Ricci flow, the following hold.
    \begin{enumerate}[label=(\alph*)]
        \item If $(G, \ell)$ is a nonnegatively curved, then $\displaystyle \tderiv{\omega_e}\Big|_{t = 0}$ is nonpositive for any edge $e$.

        \item If $e$ is chosen such that $\curv_e / \ell_e$ is minimal at $t = 0$, then $\displaystyle \tderiv{\omega_e}\Big|_{t = 0} + \omega_e \frac{\curv_e}{\ell_e}\Big|_{t = 0} \leq 0$.
    \end{enumerate}
\end{lem}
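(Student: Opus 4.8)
The plan is to differentiate $\omega_e$ along the Ricci flow by the chain rule and then play off two structural facts about effective resistance: Rayleigh's monotonicity law (Proposition~\ref{prop:rayleigh}), which says $\partialderiv{\omega_e}{\ell_f} \geq 0$ for \emph{every} edge $f$, and the degree-one homogeneity of $\omega_e$ in the edge lengths recorded in Lemma~\ref{lem:res-homogeneous-pde}. First I would write down the evolution equation: since $\tderiv{\ell_f} = -\curv_f$ under the flow, the chain rule gives
\[
    \tderiv{\omega_e} = \sum_{f \in E} \partialderiv{\omega_e}{\ell_f}\,\tderiv{\ell_f} = -\sum_{f \in E} \partialderiv{\omega_e}{\ell_f}\,\curv_f .
\]

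Part (a) then drops out immediately: if $(G,\ell)$ is nonnegatively curved then every $\curv_f \geq 0$, and every $\partialderiv{\omega_e}{\ell_f} \geq 0$ by Rayleigh, so the right-hand side above is $\leq 0$. For part (b) I would feed the identity $\omega_e = \sum_{f} \ell_f\,\partialderiv{\omega_e}{\ell_f}$ from Lemma~\ref{lem:res-homogeneous-pde} into the extra term $\omega_e\,\curv_e/\ell_e$ and combine the two sums:
\[
    \tderiv{\omega_e} + \omega_e\,\frac{\curv_e}{\ell_e}
    = -\sum_{f \in E}\partialderiv{\omega_e}{\ell_f}\,\curv_f + \frac{\curv_e}{\ell_e}\sum_{f \in E}\ell_f\,\partialderiv{\omega_e}{\ell_f}
    = \sum_{f \in E}\ell_f\,\partialderiv{\omega_e}{\ell_f}\left(\frac{\curv_e}{\ell_e} - \frac{\curv_f}{\ell_f}\right).
\]
By the hypothesis that $\curv_e/\ell_e$ is minimal over all edges at $t = 0$, each factor $\frac{\curv_e}{\ell_e} - \frac{\curv_f}{\ell_f} \leq 0$; since also $\ell_f > 0$ and $\partialderiv{\omega_e}{\ell_f} \geq 0$, every summand is $\leq 0$ and the bound follows.

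The one step worth flagging is the regrouping in part (b): the point is that rewriting $\omega_e$ through Euler's homogeneous function theorem produces precisely the weight $\ell_f\,\partialderiv{\omega_e}{\ell_f}$ that is needed for the two sums to collapse into a single sum of differences $\curv_e/\ell_e - \curv_f/\ell_f$, after which the minimality assumption applies termwise — there is no genuine analytic difficulty here, only the right bookkeeping. I would also remark on the degenerate cases: if $e$ is a loop then $\omega_e \equiv 0$ and both claims are trivial, and if $e$ is a bridge the computation goes through verbatim, since Rayleigh's law and the homogeneity identity hold without any such exclusions. Everything else is routine differentiation.
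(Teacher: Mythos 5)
Your proposal is correct and follows essentially the same route as the paper: part (a) via the chain rule together with Rayleigh's monotonicity law, and part (b) by combining Rayleigh with the Euler homogeneity identity of Lemma~\ref{lem:res-homogeneous-pde}; your single-sum-of-differences regrouping is just an algebraic rearrangement of the paper's inequality chain. No gaps.
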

\begin{proof}
    If all curvatures are nonnegative, then under Ricci flow~\eqref{eq:ricci-flow} every edge resistance $\ell_e$ can only decrease over time.
    Rayleigh's monotonicity law (Proposition~\ref{prop:rayleigh}) states that this can only decrease the effective resistance between any vertices in the graph.
    This proves part (a).

    For part (b), we suppose that $e$ is chosen such that $\frac{\curv_e}{\ell_e} \leq \frac{\curv_f}{\ell_f}$ for all edges $f$, at time $t = 0$.
    Then
    \begin{align*}
        - \tderiv{\omega_e} 
        = - \sum_{f \in E} \partialderiv{\omega_e}{\ell_f} \tderiv{\ell_f} 
        &= \sum_{f \in E} \partialderiv{\omega_e}{\ell_f} \curv_f && \text{by Ricci flow \eqref{eq:ricci-flow}}\\
        &= \sum_{f \in E} \ell_f \partialderiv{\omega_e}{\ell_f} \left(\frac{\curv_f}{\ell_f}\right) \\
        &\geq \sum_{f \in E} \ell_f \partialderiv{\omega_e}{\ell_f} \left(\frac{\curv_e}{\ell_e}\right) && \text{since $\ell_f \partialderiv{\omega_e}{\ell_f} \geq 0$.}
    \end{align*}
    The last inequality applies Rayleigh's monotonicity law, Proposition~\ref{prop:rayleigh}.
    By Lemma~\ref{lem:res-homogeneous-pde}, we have
    $\displaystyle \sum_{f \in E} \ell_f \frac{\partial\,\omega_e}{\partial\,\ell_f} = \omega_e$ 
    so we conclude that $\displaystyle -\tderiv{\omega_e} \geq \omega_e \left(\frac{\curv_e}{\ell_e}\right)$ at $t = 0$, as desired.
\end{proof}

\subsection{Main proof}
\begin{proof}[Proof of Theorem~\ref{thm:positively-curved}]
    Suppose $(G, \ellzero)$ is a nonnegatively curved finite graph, i.e., $\curv_e(\ellzero) \geq 0$ for every edge $e$ in $E(G)$.
    We claim that as the edge lengths evolve under Ricci flow, the edge curvatures remain nonnegative.
    To verify this, it suffices to check that the ratios $\curv_e(t) / \ell_e(t)$ remain nonnegative.
    We consider the collection of real numbers 
    \[\left\{ \frac{\curv_e(t)}{\ell_e(t)} : e \in E(G)\right\}\]
    as the time $t$ varies.
    We claim that, as $t$ increases, the value $\min_{e \in E(G)} \left\{ \frac{\curv_e(t)}{\ell_e(t)}\right\}$ cannot decrease.

    For convenience, in the rest of the proof we will use $\curv_e$ and $\ell_e$ to denote $\curv_e(t)$ and $\ell_e(t)$, respectively.
    Suppose $e$ is chosen to minimize the value of $\curv_e / \ell_e$ (at some particular time $t$).
    We compute
    \begin{align*}
        \tderiv{} \left( \frac{\curv_e}{\ell_e} \right) 
        &= \frac{1}{\ell_e^2} \left(\ell_e \tderiv{\curv_e} - \curv_e \tderiv{\ell_e} \right) \\
        &= \frac{1}{\ell_e^2} \left(\ell_e \tderiv{\curv_e} + \curv_e^2 \right) &&\text{by Ricci flow \eqref{eq:ricci-flow}} \\
        &= \frac{1}{\ell_e^2} \left(-\omega_e \frac{\curv_e}{\ell_e} - \tderiv{\omega_e} + \curv_e^2\right) &&\text{by Lemma~\ref{lem:curvature-flow} for $\tderiv{\curv_e}$.} 
    \end{align*}
    Lemma~\ref{lem:res-flow-bound} states that $\displaystyle \omega_e \frac{\curv_e}{\ell_e} + \tderiv{\omega_e} \leq 0$.
    Thus
    \[
        \tderiv{} \left( \frac{\curv_e}{\ell_e} \right) = \frac{1}{\ell_e^2} \left(-\omega_e \frac{\curv_e}{\ell_e} - \tderiv{\omega_e} + \curv_e^2\right)
        \geq \frac{1}{\ell_e^2} \curv_e^2 \geq 0.
    \]
    This shows that $\min_{e \in E(G)} \left\{  \frac{\curv_e(t)}{\ell_e(t)}\right\}$ is nondecreasing over time under Ricci flow, so all curvatures $\curv_e$ stay nonnegative.
\end{proof}

\begin{rmk}
    The proof of Theorem~\ref{thm:positively-curved} shows that for any initial graph $(G, \ellzero$), without assumptions on curvature, the value of
    $\displaystyle
        \min \left\{ \frac{\curv_e(t)}{\ell_e(t)} : e \in E(G) \right\}
    $
    is nondecreasing over time.
    Thus in particular, positively-curved graphs remain positively-curved. On some domain $[0,T')\subseteq[0,T)$ this is a consequence of continuity, but this brings the result to the full domain on which Ricci flow exists.
\end{rmk}

\section{Further questions}\label{sec:further}

Here we discuss some remaining open questions which would be interesting for further investigation.  

In geometry, we say a manifold $(M, g)$ has an {\em Einstein metric} if the Ricci curvature is a multiple of the metric, $\mathrm{Ric} = \lambda \, g$.
For such a manifold, Ricci flow causes the metric to shrink or expand ``homothetically,'' meaning that the metric $g(t)$ at any time $t$ only differs from $g$ by a constant factor.~\cite[Chapter 1.2.1]{topping}.
We can make the analogous definition for Ricci--Foster curvature.
\begin{dfn}
    An edge-weighted graph $(G, \ell)$ is an {\em Einstein network} if there is a constant $\lambda \in \RR$ such that
    \[
        \curv_e = \lambda \,\ell_e \qquad\text{for all } e \in E(G).
    \]
\end{dfn}
By Proposition~\ref{prop:curv-rescaling}, for an Einstein network the curvature $\curv_e$ remains constant under Ricci flow, and the graphs $(G, \ell(t))$ shrink or expand homothetically.
(By Proposition~\ref{prop:curvature-sum}, a finite graph can only shrink homothetically.)
Earlier, we saw the $n$-cycle is an example of an Einstein network.
By symmetry reasons, if $(G, \ell)$ is any edge-transitive graph (with length-preserving symmetries), then it is an Einstein network.
Are there any other examples?

\begin{prob}
    Which graphs $G$ admit a choice of edge lengths $\{\ell_e : e \in E(G)\}$ such that $(G, \ell)$ is an Einstein network?
\end{prob}



For manifolds, Perelman~\cite{perelman} proved that Ricci flow does not have any periodic orbits on the set of manifolds up to diffeomorphism, other than the constant orbits (e.g. Einstein manifolds).
\begin{prob}\label{prob:periodic}
    Does Ricci flow have any periodic orbits, on the space of graphs $(G, \ell)$ up to scaling?  
\end{prob}
We conjecture that the answer to Problem~\ref{prob:periodic} is ``no.''
Finally, we mention two additional questions.
\begin{prob}
    Can any improvements be given regarding the interval on which we can show the existence of the Ricci flow?
\end{prob}

\begin{prob}
    A discrete Ricci flow based on Olliver curvature has been applied in community detection~\cite{NLLG}. What applications, if any, does Ricci--Foster flow defined in \eqref{eq:ricci-flow-intro} admit? 

\end{prob}

\bibliographystyle{alpha}
\bibliography{ricci-flow-ref}

\end{document}